\theoremstyle{plain}
\newtheorem{theorem}{Theorem}[section]
\newtheorem{lemma}[theorem]{Lemma}
\newtheorem{proposition}[theorem]{Proposition}
\theoremstyle{definition}
\newtheorem{remark}[theorem]{Remark}
\theoremstyle{definition}
\def\fnum{equation}
\numberwithin{equation}{section}
\newcommand{\R}{{\bf{R}}}
\begin{document}

\title{Self-shrinking Platonic solids}

\author{Daniel Ketover}\address{Imperial College London\\Huxley Building, 180 Queen's Gate, London SW7 2RH}

 \email{d.ketover@imperial.ac.uk}

\thanks{I was supported by an NSF Postdoctoral Research Fellowship DMS-1401996 as well as by ERC-2011-StG-278940.}
 
\begin{abstract}  We construct new embedded self-shrinkers in $\R^3$ of genus $3$, $5$, $7$, $11$ and $19$ using variational methods.  Our self-shrinkers resemble doublings of the Platonic solids and were discovered numerically by D. Chopp in 1994.
  \end{abstract}

\maketitle

\setcounter{section}{-1}

\section{Introduction}

A surface $\Sigma\subset\R^3$ is a \emph{self-shrinker} if it satisfies the equation 
\begin{equation}\label{shrinker}
H=\frac{1}{2}\langle x,\nu\rangle,
\end{equation}
where $\nu$ and $H$ denote the normal and mean curvature vector, respectively.  Such surfaces move via homotheties by the mean curvature flow (MCF) and model the singularities that can form along the flow.  

It is well known (Proposition 3.6 in \cite{CM}) that a surface $\Sigma$ satisfying the self-shrinker equation \eqref{shrinker} is equivalent to $\Sigma$ being a minimal surface in the Gaussian metric $(\R^3, e^{-|x|^2/4}\delta_{ij})$.  Equivalently, $\Sigma$ is a critical point for the Gaussian area functional
\begin{equation}\label{Gaussian area}
F(\Sigma)= \frac{1}{4\pi}\int_\Sigma e^{-\frac{|x|^2}{4}}.
\end{equation}

While in standard $\R^3$ one can produce many minimal surfaces using the Weierstrass representation, in the Gaussian metric there is no such method and very few self-shrinkers are known to exist. The simplest shrinkers are the planes through the origin, cylinders and the two-sphere of radius $2$.  Loosely speaking, the plane models smooth points along the flow, the cylinder models neck-pinches and the sphere models the singularities that occur as a convex body shrinks to a round point.  There is also a rotationally symmetric torus discovered by Angenent \cite{A}.

Using PDE gluing methods, Kapouleas-Kleene-M{\o}ller \cite{KKM} and Nguyen \cite{N} constructed examples which have high genus and resemble a desingularization of a sphere and plane. M{\o}ller \cite{M} later desingularized the sphere and Angenent torus to produce compact self-shrinkers of high genus.   

In this paper we construct five new embedded self-shrinkers of low genus using variational methods.  The examples were first discovered numerically in 1994 by Chopp.  In his problem list on MCF, Ilmanen \cite{I} asked whether they could be constructed rigorously. We have the following:

\begin{theorem}\label{main}
There exist an embedded self-shrinker of genus $3$ with tetrahedral symmetry $T_{12}$, embedded self shrinkers of genus $5$ and $7$ with octahedral symmetry $O_{24}$, as well as embedded self-shrinkers of genus $11$ and $19$ with icosahedral symmetry $I_{60}$.
\end{theorem}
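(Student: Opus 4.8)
\emph{Strategy of proof.} The plan is to produce each of the five surfaces by an \emph{equivariant min--max} construction for the Gaussian area functional $F$ of \eqref{Gaussian area}. We use that, by Proposition~3.6 of \cite{CM} (recalled above), a self--shrinker is precisely a minimal surface of the Gaussian metric, and that each of the finite rotation groups $G\in\{T_{12},O_{24},I_{60}\}$ acts on $\R^3$ by isometries of this metric, since it fixes the origin. For each target genus $g$ I would first fix, by hand, a $G$--invariant one--parameter sweepout $\{\Sigma_t\}_{t\in[-1,1]}$ of a large ball whose generic slices are isotopic to a ``doubling'' of the relevant Platonic solid $P$: two nearly concentric topological spheres joined by one thin neck along each ray from the origin through a face--center of $P$. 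Since the faces of $P$ form a single $G$--orbit and each such slice is then a closed orientable surface of genus equal to $(\#\,\text{faces of }P)-1$, we take $P$ to be the tetrahedron ($4$ faces, $g=3$) for $G=T_{12}$; the cube ($6$ faces, $g=5$) and the octahedron ($8$ faces, $g=7$) for $G=O_{24}$; and the dodecahedron ($12$ faces, $g=11$) and the icosahedron ($20$ faces, $g=19$) for $G=I_{60}$. The slices are arranged to degenerate to a point (or to $\emptyset$) at $t=\pm1$; such a sweepout can be built, for instance, as a smoothing of the level sets of a suitable $G$--invariant Morse function.

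Let $W$ denote the associated \emph{equivariant width}: the infimum, over sweepouts $G$--isotopic to the one just fixed, of the maximum of $F$ over slices. Running the min--max machinery of Pitts, Simon--Smith and Colding--De Lellis in its equivariant form, together with the genus bounds of De~Lellis--Pellandini, one extracts from a minimizing sequence of sweepouts a self--shrinker $\Sigma$ of Gaussian area $W$, smooth and embedded but a priori with some integer multiplicity. Two features of the Gaussian metric must be handled with care. First, it is non--compact, and in fact incomplete at Euclidean infinity, so one must ensure that no area of the min--max sequence is lost to that end: one works within a fixed large ball, and uses a barrier argument together with the smooth compactness theory of Colding--Minicozzi for self--shrinkers of bounded genus and area to obtain a compact limit $\Sigma$. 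Second, one must check that $W>0$, so that $\Sigma$ is not a point; this follows from a lower bound --- via an isoperimetric or cut--and--paste estimate --- on the Gaussian area of any $G$--invariant surface in the isotopy class of the slices.

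The heart of the matter is to show that $\Sigma$ is connected, has multiplicity one, and has genus \emph{exactly} $g$. The inequality $\genus(\Sigma)\le g$ is automatic from the min--max genus bounds. For the reverse inequality and for multiplicity one I would invoke the \emph{catenoid estimate} of Ketover--Marques--Neves, applied equivariantly: whenever a slice of a near--optimal sweepout develops a thin catenoidal neck, the family can be modified --- $G$--equivariantly --- to open that neck while \emph{strictly} decreasing the maximum of $F$ along it. Thus $W$ lies strictly below the Gaussian area of any configuration in which one of the $(\#\,\text{faces of }P)$ necks has pinched off, or in which the surface appears doubled; hence no neck degenerates in the limit and $\Sigma$ appears with multiplicity one. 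A Gauss--Bonnet computation on the quotient orbifold $\Sigma/G$ --- accounting for the cone points where the rotation axes of $G$ meet $\Sigma$ --- then pins $\genus(\Sigma)$ to $g$ and confirms the symmetry type. Finally, the previously known embedded self--shrinkers of small genus --- planes, cylinders, the round sphere of radius $2$, and the Angenent torus \cite{A} --- all have genus at most $1$, so each $\Sigma$ is genuinely new; and since the five genera $3,5,7,11,19$ are pairwise distinct, the five surfaces are pairwise non--congruent (in particular the two $O_{24}$--symmetric ones, and the two $I_{60}$--symmetric ones, are distinct, arising from a polyhedron and from its dual).

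The step I expect to be the main obstacle is precisely this last one: choosing the equivariant sweepout delicately enough that its width lies in the narrow window \emph{above} the cost of the intended doubled configuration but \emph{below} the cost of every possible degeneration --- a pinched handle, a collapsed sheet, a surface acquiring extra symmetry, or a multiplicity--two limit --- and carrying the catenoid estimate through in the equivariant category, where every neck--opening modification must respect $G$. The incompleteness of the Gaussian background, and the consequent need to confine the min--max sequence, is a genuine but secondary difficulty; and some further care is needed to rule out degenerate behaviour of $\Sigma$ along the rotation axes of $G$.
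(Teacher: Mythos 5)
Your outline has the right ingredients in broad strokes (equivariant min--max in the Gaussian metric, doubling of Platonic solids, the catenoid estimate), but the structure of the variational argument is wrong in a way that I think would block the proof.

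You propose a \emph{one}--parameter $G$--equivariant sweepout whose generic slices already have genus $g = (\#\text{faces})-1$, and then run min--max directly on that family. The problem is that nothing in this setup prevents the width $W$ of such a family from simply equaling the width $\omega_1$ of the basic one--parameter sweepout by concentric round spheres, with the min--max limit being the self--shrinking sphere $\mathbb{S}^2_*$ (with multiplicity one) after all the necks pinch. The catenoid estimate by itself gives only an \emph{upper} bound $W < 2\omega_1$; it cannot give the \emph{lower} bound $W > \omega_1$, and with only one parameter there is no mechanism to force that strict inequality. The paper's proof instead constructs a \emph{two}--parameter family $\Phi_{t,s}$ (a sphere at radius $t$ and a sphere at radius $s$ joined by necks, with a M\"obius--band identification on the boundary) and appeals to a Lusternick--Schnirelman argument (Lemma~\ref{ls}): if $\omega_2=\omega_1$ then there would be infinitely many equivariant shrinkers of genus $0$ or $5$ and area $\omega_1$, which Brendle's classification of genus--zero shrinkers rules out for the genus--$0$ ones. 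This is how the strict inequality $\omega_1<\omega_2<2\omega_1$ is obtained, and that double inequality is what the paper then feeds into the quotient--orbifold compression analysis to exclude all degenerations. Your proposal has no substitute for this step.

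Two further points. Your claimed ``Gauss--Bonnet on $\Sigma/G$'' argument to pin down the genus cannot work as stated: $G$--equivariance by itself puts no lower bound on $\genus(\Sigma)$ (the round sphere is already $G$--equivariant). The genus of the min--max limit is controlled in the paper by a case analysis of equivariant compressions in $\R^3/G$, using Riemann--Hurwitz to read off what each compression does in the lift and using Brendle's classification and $\omega_1<\omega_2<2\omega_1$ to rule each case out; there is no purely Gauss--Bonnet shortcut. Finally, you assert one can confine the min--max sequence and obtain a \emph{compact} limit; the paper explicitly says it cannot determine whether the shrinkers it produces are compact, and gives an example (the foliation by spheres limiting to a plane) showing that even sweepouts by closed surfaces can yield a non--compact min--max limit, and also explains why a na\"ive barrier/coning argument fails. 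So that step of your outline is an overclaim.
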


Each of the self-shrinkers produced by Theorem \ref{main} can be thought of as a doubling of a Platonic solid.  For the tetrahedral surface,  for instance, consider two smoothed out tetrahedra parallel to each other.  Add in one catenoidal neck at the center of each face joining the two tetrehedra.  Since there are four faces, and the first neck adds no genus, one obtains in this way a surface of genus three, which is isotopic to the self-shrinker constructed in Theorem \ref{main}.  For each of the five surfaces constructed by Theorem \ref{main} the genus is similarly one less than the number of faces of the corresponding Platonic solid.  For illustrations of our self-shrinkers, see Figures 9 and 13 in \cite{C}.

 We produce our new self-shrinkers using variational methods.  They are achieved as min-max limits from a suitable two parameter equivariant min-max procedure.  The main ingredients in the construction are the min-max theory for Gaussian weight  \cite{KZ}, equivariant min-max theory \cite{K2}, the Catenoid Estimate \cite{KMN} and Lusternick-Schnirelman theory.  We also use the recent classification by Brendle of genus zero self-shrinkers \cite{B}.  

Let us give a brief explanation of how, say, the ``doubled cube" of genus $5$ arises variationally.  First consider one-parameter sweepouts of Gaussian $\R^3$ by genus zero surfaces that are $O_{24}$-equivariant (i.e. have the symmetries of the cube).  Running an equivariant min-max procedure, one produces the standard self-shrinking two-sphere.  There is even an optimal foliation $\Phi_t$ of $\R^3$ consisting of round spheres which the min-max limit sits atop.  Now we consider the two parameter family $\Phi_{t,s}$ of genus $5$ surfaces where $\Phi_{t,s}$ is the connect sum of $\Phi_t$ and $\Phi_s$ joined along $6$ necks at the center of each face of the (spherical) cube.  By Lusternick-Schnirelman theory, the width of this two parameter family cannot be equal to the width of the first.  By the Catenoid Estimate, the width corresponding to this two parameter family is strictly less than twice the width of the one parameter family $\Phi_t$ and thus one cannot get as a min-max limit twice the self-shrinking two-sphere.   We then rule out other degenerations of the min-max limit, using in part Brendle's result on uniqueness of genus $0$ self-shrinkers.   It follows that the genus does not degenerate and one produces a min-max minimal surface of genus $5$.

We are unable to tell whether the shrinkers produced by Theorem \ref{main} are compact.  The numerical analysis by Chopp \cite{C} suggests the surfaces are all  compact.  There are on the other hand simple homotopy classes of surfaces consisting of closed surfaces where the min-max limit is non-compact.  For instance, consider the (non-equivariant) saturation coming from the one parameter foliation of $\R^3$ by two-spheres.  The min-max limit is a plane with index $1$, and thus has developed an end (Example 2 in \cite{KZ}).  

To rule out ends one might hope to show that the Gaussian area of spherical caps is strictly less than the area of cones over the same curve, and thus by projecting the min-max sequence into a bounded region one could decrease area.  This however does not seem to be the case, as the area of the cone is comparable to the spherical cap at certain radii.

I am not aware of any gluing construction that has produced surfaces with low genus.  This is one definite advantage of the min-max method in the presence of symmetry. Classification theorems of some sort, however, are typically required.  In this paper we will need to appeal to Brendle's result \cite{B} on uniqueness of genus zero self-shrinkers (Bernstein-Wang \cite{BW} have also proved classification results which give an alternative approach to some of our arguments).  Similarly, in \cite{KMN} in order to double the Clifford torus with very few necks we needed to appeal to the resolution of the Willmore Conjecture \cite{MN}.
 
The organization of this paper is as follows. In Section 1 we introduce the min-max theory for the Gaussian weight in the equivariant setting.  We also introduce the Catenoid Estimate and generalize it slightly to handle two parameters. In Section 2 we prove Theorem \ref{main}. 
\\
\\
{\it Acknowledgements:  I am grateful to Lu Wang and Toby Colding for first making me aware of this problem.  It is a pleasure to thank Fernando Marques, Andr\'{e} Neves, Otis Chodosh and Xin Zhou for several conversations.}

\section{Gaussian Min-max theory}
Together with X. Zhou \cite{KZ}, we extended the classical min-max theory of Simon-Smith \cite{SS} (which refined the earlier approach of Almgren and Pitts \cite{P}) to the case when the ambient metric is the singular Gaussian metric.  

The advantage of the Simon-Smith approach to min-max theory as opposed to that of Almgren-Pitts is that it allows us to consider sweepouts with a fixed topology, and thus we have some hope to control the genus of the limiting minimal surfaces.  Genus bounds for min-max limits were first proved in \cite{SS} in the case of sweepouts of three-spheres by two-spheres.  The argument of Simon-Smith was later generalized by De Lellis-Pellandini \cite{DP}.  Optimal genus bounds as conjectured by Pitts-Rubinstein \cite{PR} were established in \cite{K1}.  

 Let us first give an exposition of min-max theory in the Gaussian setting that we will need in this paper.  First we discuss the notion of a continuous sweepout.   Set $I^n=[0,1]^n$ and let $\{\Sigma_t\}_{t\in I^n}$ be a family of closed subsets of $\R^3$ and $B\subset\partial I^n$.   We call the family $\{\Sigma_t\}$ a \emph{(genus-g) sweepout} if

\begin{enumerate}
\item $F(\Sigma_t)$ is a continuous function of $t\in I^n$,
\item $\Sigma_t$ converges to $\Sigma_{t_0}$ in the Hausdorff topology as $t\rightarrow t_0$.
\item For $t_0\in I^n\setminus B$, $\Sigma_{t_0}$ is a smooth embedded closed surface of genus $g$ and $\Sigma_t$ varies smoothly for $t$ near $t_0$.
\item For $t\in B$, $\Sigma_t$ consists of the union of a $1$-complex together (possibly) with a smooth surface.
\end{enumerate}

Given a sweepout $\{\Sigma_t\}$, denote by $\Pi=\Pi_{\{\Sigma_t\}}$ the smallest saturated collection of sweepouts containing $\{\Sigma_t\}$.  We will call two sweepouts \emph{homotopic} if they are in the same saturated family. 
We define the \emph{width} of $\Pi$ to be
\begin{equation}
W(\Pi,\R^3)=\inf_{\{\Lambda_t\}\in\Pi}\sup_{t\in I^n} F(\Lambda_t).
\end{equation}
A \emph{minimizing sequence} is a sequence of sweepouts $\{\Sigma_t\}^i\in\Pi$ such that 
\begin{equation}
\lim_{i\rightarrow\infty}\sup_{t\in I^n}F(\Sigma_t^i)=W(\Pi,\R^3).
\end{equation}
A \emph{min-max sequence} is then a sequence of slices $\Sigma_{t_i}^i$, $t_i\in I^n$ taken from a minimizing sequence so that $F(\Sigma_{t_i}^i)\rightarrow W(\Pi,\R^3)$.
The main point of the Min-Max Theory \cite{KZ} is that if the width is greater than the maximum of the areas of the boundary surfaces, then some min-max sequence converges to a minimal surface (i.e. a self-shrinker) in $\R^3$.  The statement about the genus collapse was proved in \cite{K1}:

\begin{theorem} (Min-Max Theorem with Gaussian weight \cite{KZ})\label{minmax}
Given a sweepout $\{\Sigma_t\}_{t\in I^n}$ of genus $g$ surfaces, if
\begin{equation}
W(\Pi,\R^3)> \sup_{t\in\partial I^n} F(\Sigma_t),
\end{equation}
then there exists a min-max sequence $\Sigma_i:=\Sigma_{t_i}^i$ such that
\begin{equation}
\Sigma_i\rightarrow k\Gamma \mbox{     as varifolds,} 
\end{equation}
where $\Gamma$ is a connected smooth embedded self-shrinker and $k$ a  positive integer. Moreover, for any compact domain $K\subset\R^3$, after performing finitely many compressions on $\Sigma_i\cap K$ and discarding some components, each connected component of $\Sigma_i\cap K$ is isotopic to $\Gamma\cap K$ implying the genus bound:
\begin{equation}\label{genusbound}
kg(\Gamma)\leq g, 
\end{equation}
where $g(\Gamma)$ denotes the genus of $\Gamma$.
\end{theorem}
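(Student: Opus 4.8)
The plan is to combine the Gaussian min-max construction of \cite{KZ} with the genus-tracking refinement of the Simon-Smith argument carried out in \cite{K1} (building on \cite{SS,DP}). Step one is the \emph{pull-tight} procedure: starting from a minimizing sequence $\{\Sigma_t\}^i\in\Pi$, one deforms it through a continuous family of ambient isotopies decreasing $F$, so that every min-max sequence extracted from the new minimizing sequence converges, as varifolds, to a varifold $V$ that is stationary for the Gaussian area $F$ (equivalently, has vanishing first variation in the metric $e^{-|x|^2/4}\delta_{ij}$). The hypothesis $W(\Pi,\R^3)>\sup_{t\in\partial I^n}F(\Sigma_t)$ guarantees that a min-max sequence cannot be drawn entirely from the (degenerate) boundary slices, so $V\neq 0$.

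Step two is regularity, together with the crucial non-escape of area to infinity. Since $\Sigma$ satisfies \eqref{shrinker} iff it is a minimal surface for the Gaussian metric $e^{-|x|^2/4}\delta_{ij}$, which is smooth on all of $\R^3$, inside any fixed ball $B_R$ one is in the classical Simon-Smith setting: the min-max sequence is almost-minimizing in small annuli, admits local area-minimizing replacements, and hence its limit is a smooth embedded minimal (i.e. self-shrinker) surface there. The point beyond \cite{SS} is that $(\R^3,e^{-|x|^2/4}\delta_{ij})$ is non-compact and this metric degenerates at infinity; \cite{KZ} shows — using that $F(\partial B_R)\to 0$ as $R\to\infty$, so that large coordinate spheres are efficient competitors — that after pull-tight some min-max sequence has its mass concentrated in a fixed ball and loses no mass at infinity, so $\|V\|(\R^3)=W(\Pi,\R^3)$. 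Applying the constancy theorem and the maximum principle component-wise then gives $V=k\Gamma$ with $\Gamma$ a connected smooth embedded self-shrinker and $k$ a positive integer.

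Step three is the genus bound. Fix a compact domain $K$. Following De Lellis-Pellandini and, in sharp form, \cite{K1}: away from finitely many points of curvature concentration the convergence $\Sigma_i\cap K\to k\,\Gamma\cap K$ is smooth and graphical with multiplicity $k$, while near each bad point $\Sigma_i$ consists of at most $k$ nearly parallel sheets joined by thin necks. Performing finitely many compressions along these necks (and discarding spurious components) produces, for $i$ large, a surface each of whose components is isotopic to $\Gamma\cap K$; since compressions never raise genus and $\Sigma_i$ has genus $g$, this yields $k\,g(\Gamma\cap K)\le g$, and letting $K$ exhaust $\R^3$ gives \eqref{genusbound}.

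The main obstacle — and the essential content of \cite{KZ} over the classical theory — is precisely the non-escape of area to infinity in the non-compact, metrically degenerate Gaussian background; once that is in hand the remainder is a faithful transcription of the Simon-Smith / De Lellis-Pellandini / \cite{K1} machinery to a metric that is smooth on every compact set. When an ambient symmetry group $G$ is present, the entire argument is run inside the space of $G$-equivariant sweepouts, as in \cite{K2}, which introduces no additional difficulty at this level.
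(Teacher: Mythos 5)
The paper does not prove Theorem~\ref{minmax}; it is stated as a quotation, with the existence and regularity attributed to \cite{KZ} and the genus bound to \cite{K1} (the sentence immediately preceding the theorem reads ``The statement about the genus collapse was proved in \cite{K1}''). There is therefore no in-paper proof to compare against, but your sketch is a faithful high-level summary of how those references combine, and it correctly isolates the one genuinely new difficulty in \cite{KZ} over the classical Simon--Smith theory: controlling mass escape to infinity in the non-compact, incomplete Gaussian background, given that the metric is smooth on every compact set. One small correction: the connectedness of $\Gamma$ is not a ``maximum principle applied component-wise'' statement. As the paper's remark immediately following the theorem explains, it comes from the Frankel-type property of Gaussian $\R^3$ (any two self-shrinkers must intersect), which is a weighted second-variation argument using the positivity of the Bakry--\'Emery Ricci curvature of the Gaussian metric, applied to the components of the limiting varifold rather than to the approximating min-max sequence.
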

\begin{remark}
The fact that the self-shrinker produced by Theorem \label{minmax} is connected follows from the Frankel-type property of Gaussian space: any two self-shrinkers must intersect.  
\end{remark}
We will also need the following equivariant version of Theorem \ref{minmax} from \cite{K2} (as announced in \cite{PR},\cite{PR2}).  Let $G\subset SO(3)$ be a finite group acting on $\R^3$.  A set $\Sigma\subset\R^3$ is called $G$-equivariant if $g(\Sigma)=\Sigma$ for all $g\in G$.  Define the \emph{singular set} $\mathcal{S}$  to be the set of points in $x\in\R^3$ with isotropy group $G_x= \{g\in G\; : gx=x\}$ not equal to the identity $\{e\}$ element. The set $\mathcal{S}$ consists of straight line segments with isotropy $\mathbb{Z}_n$  joining up at the origin at a point with isotropy $\mathbb{D}_n$, $T_{12}$, $I_{24}$, or $I_{60}$.

Suppose $\{\Sigma_t\}_{t\in I^n}$ is an $n$-parameter genus $g$ sweepout of $\R^3$ by $G$-equivariant surfaces so that each surface with positive area intersects $\mathcal{S}$ transversally.  Consider the saturation $\Pi^G_{\Sigma_t}$ of the family $\{\Sigma_t\}$ by isotopies through $G$-equivariant isotopies.  Then we have the following equivariant version of Theorem \ref{minmax}:
\begin{theorem}(\cite{K2})\label{equivariant}
If $W(\Pi^G_{\Sigma_t})>\sup_{t\in\partial I^n} F(\Sigma_t)$, then some min-max sequence converges to a $G$-equivariant smooth self-shrinker in $\R^3$.  The genus bound \eqref{genusbound} also holds and furthermore any compression is $G$-equivariant. 
\end{theorem}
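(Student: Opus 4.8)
The plan is to run the Gaussian min-max argument behind Theorem \ref{minmax} (from \cite{KZ}) entirely inside the $G$-equivariant category, importing the local structure theory along the singular set from the Riemannian equivariant min-max theorem of \cite{K2}. Since $G \subset SO(3)$ acts by isometries of $\R^3$ fixing the origin, the Gaussian weight $e^{-|x|^2/4}$, and hence the functional $F$, are $G$-invariant; so every analytic device used in \cite{KZ} respects the symmetry and descends to the quotient.

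First I would set up the equivariant pull-tight. Starting from a minimizing sequence $\{\Sigma_t\}^i \in \Pi^G_{\Sigma_t}$ for $W(\Pi^G_{\Sigma_t})$, I flow the slices by the negative $L^2$-gradient flow of $F$, carried out $G$-equivariantly; this produces a new minimizing sequence whose almost-maximal slices are almost stationary for $F$ under $G$-equivariant deformations, while preserving conditions (1)--(4) of a sweepout and staying inside $\Pi^G_{\Sigma_t}$. The one point needing care, namely mass escaping to infinity since $\R^3$ is non-compact, is handled exactly as in \cite{KZ}: the quadratic-exponential decay of the weight bounds the relevant varifold masses uniformly and confines concentration to a fixed compact set, and this estimate is itself $G$-invariant. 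The hypothesis $W(\Pi^G_{\Sigma_t}) > \sup_{t \in \partial I^n} F(\Sigma_t)$ then forces a min-max sequence $\Sigma_i$ to converge as varifolds to a non-zero stationary integral $G$-equivariant varifold $V$. Over the smooth complete manifold $(\R^3 \setminus \mathcal{S})/G$ carrying the pushed-down Gaussian metric, the Simon-Smith regularity theory of \cite{KZ} (including the discrete-to-continuous interpolation) applies directly and shows that $V$ is there a smooth embedded self-shrinker with integer multiplicity.

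The heart of the argument is the regularity of $V$ across the one-dimensional singular locus $\mathcal{S}$. Here I would argue as in \cite{K2}: the initial transversality of the $\Sigma_t$ with $\mathcal{S}$, together with an equivariant tightening chosen to preserve transversality, makes every $\Sigma_i$ meet $\mathcal{S}$ transversally; and along a segment with isotropy $\mathbb{Z}_n$, $n \ge 3$, equivariance forces the tangent plane of a smooth invariant surface at an axis point to be orthogonal to the axis (the only $2$-plane invariant under rotation by $2\pi/n$ about the axis), so $V$ is either disjoint from a neighborhood of that segment or crosses it transversally as a single smooth sheet. The segments with $\mathbb{Z}_2$-isotropy, and the origin, where the local group is $\mathbb{D}_n$, $T_{12}$, $O_{24}$, or $I_{60}$ and the quotient is a metric cone on the spherical orbifold $S^2/G$, are treated by the removable-singularity and reflection arguments of \cite{K2}. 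One concludes $V = k\Gamma$ with $\Gamma$ a connected smooth embedded $G$-equivariant self-shrinker and $k$ a positive integer, connectedness following from the Frankel-type property of Gaussian space. Finally, the genus bound \eqref{genusbound} follows from the lifting-and-compression argument of \cite{K1} (refining \cite{SS},\cite{DP}) applied on a $G$-invariant compact domain $K$: the compressing disks occur in $G$-orbits and can be surgered simultaneously, so the compressions are $G$-equivariant, and after finitely many of them each component of $\Sigma_i \cap K$ is equivariantly isotopic to $\Gamma \cap K$, giving $k\,g(\Gamma) \le g$.

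The main obstacle is precisely this regularity along $\mathcal{S}$, and especially at the origin, where the isotropy group is non-abelian: one must know that the min-max limit is a genuinely smooth embedded surface across the axes, not merely a smooth object in the orbifold, and that transversality with $\mathcal{S}$ survives in the varifold limit. This is the technical core of \cite{K2}, and the content of the present theorem is that its proof, a curvature estimate for equivariant min-max sequences combined with the local structure of $\R^3/G$ and a removable-singularity argument, goes through unchanged when the ambient metric is replaced by the smooth, complete, $G$-invariant Gaussian metric, the only additional input being the non-escape-of-mass estimate of \cite{KZ}.
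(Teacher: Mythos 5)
The paper does not prove this statement: it is quoted verbatim from \cite{K2} (the author's own equivariant min-max paper, ``as announced in \cite{PR},\cite{PR2}''), and the present paper treats it as a black box. So there is no in-paper proof to compare your attempt against. What you have written is a plausible high-level reconstruction of how that proof must go, namely by running the \cite{KZ} Gaussian min-max machinery (equivariant pull-tight, non-escape of mass from the exponential decay of the weight, Simon--Smith regularity on the smooth part) and grafting on the two genuinely new ingredients from \cite{K2}: regularity of the limit varifold across the one-dimensional singular locus $\mathcal{S}$, and the equivariance of the surgeries in the genus-bound argument of \cite{K1}. You correctly isolate the hard points --- the axis points (where for isotropy $\mathbb{Z}_n$, $n\ge 3$, invariance pins the tangent plane orthogonal to the axis, while $\mathbb{Z}_2$ and the non-abelian vertex stabilizers at the origin need separate removable-singularity arguments), and the preservation of transversality with $\mathcal{S}$ in the varifold limit.

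Two cautions, since you are reconstructing rather than reproducing: first, the claim that ``compressing disks occur in $G$-orbits and can be surgered simultaneously'' is the statement you need, not an argument for it; in \cite{K1}/\cite{DP} the compressing disks come from the replacement/almost-minimizing structure and one has to verify that the replacement construction itself can be carried out $G$-equivariantly (which is what \cite{K2} does). Second, you invoke ``a curvature estimate for equivariant min-max sequences'' --- the Simon--Smith/Pitts regularity route does not proceed through an a priori curvature estimate but through almost-minimizing-in-annuli and replacements, so the removable-singularity step near $\mathcal{S}$ in \cite{K2} is more likely to be phrased in those terms. Neither of these is a fatal gap in an outline, but they are the places where a real proof has to do work that the sketch elides.
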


We also will need the Catenoid Estimate that we proved together with F.C. Marques and A. Neves \cite{KMN}.  The Catenoid Estimate asserts that one can sweep out a small tubular neighborhood about an unstable minimal surfaces starting at the boundary of the tubular neighborhood and ending at a $1$-d graph on the minimal surface, with all areas of the sweepout surfaces strictly less than twice that of the minimal surface.  The idea is to use logarithmically cut-off parallel surfaces for the foliation in a $1$-parameter version of the ``log cut-off trick."  In \cite{KMN} we used this estimate to rule out the phenomenon of multiplicity in several different geometric situations.  In this paper we use it in an essential way to rule out that our min-max sequence collapses with multiplicity two to the self-shrinking two-sphere.

We recall the Catenoid Estimate precisely for the reader's convenience.  We will need a slight extension of it from its formulation in \cite{KMN} in order to deal with higher parameter familes.  Let us first introduce the following notation: if $\Sigma$ is a surface in a $3$-manifold $M$, and $\phi>0$ a function defined on $\Sigma$, for $\epsilon,\delta\in\mathbb{R}$ then we can define the $\phi$-adapted tubular neighborhood about $\Sigma$.
\begin{equation}
T_{\delta\phi,\epsilon\phi}:=\{\exp_p(t\phi(p)N(p)):p\in\Sigma, t\in [\delta,\epsilon]\}
\end{equation}
Then we have the following Catenoid Estimate that produces sweepouts of $T_{\delta\phi,\epsilon\phi}$ when $\delta$ and $\epsilon$ have opposite signs:
\begin{proposition}\label{catenoid2} (Theorem 2.3 in \cite{KMN})\label{catenoidinthreemanifold}
Let $M$ be a $3$-manifold and let $\Sigma$ be a closed orientable unstable embedded minimal surface in $M$ of genus $g$.  Denote by $\phi$ the lowest eigenfunction of the stability operator normalized so that $|\phi|_{L^2}=1$. Fix $p_1,p_2...p_k\in\Sigma$ and a graph $\mathcal{G}$ in $\Sigma$ so that there is a retraction $\{R_t\}_{t=0}^1$ of $\Sigma\setminus\{p_1,...p_k\}$ onto $\mathcal{G}$.  Then there exists $\alpha_0>0$ and $\tau_0>0$ so that if $\epsilon>0$ and $\delta<0$ and $\max(|\epsilon|,|\delta|)\leq\alpha_0$, there exists a sweepout $\{\Lambda_t\}_{t=0}^1$ of $T_{\delta\phi,\epsilon\phi}(\Sigma)$ so that:
\begin{enumerate}
\item $\Lambda_t$ is a smooth surface of genus $2g+k-1$ (i.e. two copies of $\Sigma$ joined by $k$ necks)  for $t\in (0,1)$,
\item $\Lambda_0=\partial (T_{\delta\phi,\epsilon\phi}(\Sigma))\cup\bigcup_{i=1}^k\{\exp_{p_i}(s\phi(p_i)N(p_i)): s\in [\delta,\epsilon]\}$
\item $\Lambda_1=\mathcal{G}$
\item For all $t\in [0,1]$, $\mathcal{H}^2(\Lambda_t)\leq 2\mathcal{H}^2(\Sigma)-\tau_0(\epsilon^2+\delta^2)$.
\end{enumerate}  
\end{proposition}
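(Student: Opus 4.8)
The plan is to build the sweepout $\{\Lambda_t\}$ explicitly, adapting the one--parameter ``log cut--off'' construction of Theorem~2.3 in \cite{KMN} to the present setting where the tube is two--sided ($\delta<0<\epsilon$) and the terminal slice is a graph rather than a point. Let $L_\Sigma$ be the Jacobi (second variation) operator of $\Sigma$; since $\Sigma$ is unstable its lowest eigenvalue $\lambda_1$ is negative, and $\phi>0$ with $|\phi|_{L^2}=1$ is the corresponding eigenfunction --- the one fixed in the statement. In Fermi coordinates $(p,s)\mapsto\exp_p(sN(p))$ about $\Sigma$ one has $T_{\delta\phi,\epsilon\phi}(\Sigma)=\{(p,s):\delta\phi(p)\le s\le\epsilon\phi(p)\}$, and the ``$\phi$--parallel'' surface $\Sigma_s:=\{\exp_p(s\phi(p)N(p)):p\in\Sigma\}$ satisfies, by the first and second variation of area,
\begin{equation}\label{e:expansion}
\mathcal H^2(\Sigma_s)=\mathcal H^2(\Sigma)+\tfrac{\lambda_1}{2}\,s^2+O(|s|^3),\qquad\lambda_1<0.
\end{equation}
Inequality \eqref{e:expansion} is the only source of the gain in (4): the plan is to take $\tau_0$ to be a small fixed multiple of $|\lambda_1|$ and then to choose $\alpha_0>0$ --- depending on $\Sigma$, on $k$, and on $\tau_0$ --- small enough that, whenever $\max(|\epsilon|,|\delta|)\le\alpha_0$, the cubic error in \eqref{e:expansion} together with every further error term produced below is dominated by the instability deficit $\tfrac{|\lambda_1|}{2}(\epsilon^2+\delta^2)$ of the two ``extreme'' sheets $\Sigma_\epsilon,\Sigma_\delta$; in particular $\mathcal H^2(\Sigma_\epsilon)+\mathcal H^2(\Sigma_\delta)<2\mathcal H^2(\Sigma)-\tau_0(\epsilon^2+\delta^2)$ with room to spare.

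With $\tau_0$ and $\alpha_0$ so fixed, I would construct $\{\Lambda_t\}$ in three stages. First, starting from $\Lambda_0=\Sigma_\epsilon\cup\Sigma_\delta\cup\bigcup_i\{\exp_{p_i}(s\phi(p_i)N(p_i)):s\in[\delta,\epsilon]\}$, open a thin catenoidal neck at each $p_i$ joining $\Sigma_\epsilon$ to $\Sigma_\delta$: each neck is a rescaled catenoid grafted to the two sheets through a logarithmic cut--off, so that the slices remain smooth, embedded and of genus $2g+k-1$, and so that --- because a thin neck joining sheets a distance $\asymp\alpha_0$ apart adds area only $O(\alpha_0^2/|\log\alpha_0|)$ --- even all $k$ necks together add less than $\tau_0(\epsilon^2+\delta^2)$ once $\alpha_0=\alpha_0(k)$ is small enough. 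This produces $\Sigma_\epsilon\cup_k\Sigma_\delta$, still below the threshold. Second, deform $\Sigma_\epsilon\cup_k\Sigma_\delta$ down toward $\mathcal G$: using the retraction $\{R_t\}$ of $\Sigma\setminus\{p_1,\dots,p_k\}$ onto $\mathcal G$, shrink the ``doubled'' part of the surface --- which is kept containing the $p_i$, so that no neck ever heals and the genus stays $2g+k-1$ --- while the complementary region becomes a single sheet lying near $\Sigma\times\{0\}$, and steer this collapse so that the doubled part ends up confined to an arbitrarily small tubular neighborhood of $\mathcal G$ (whose first Betti number is $b_1(\mathcal G)=2g+k-1$). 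Third, let that neighborhood shrink to $\mathcal G$ while the residual single sheet is swept away, reaching $\Lambda_1=\mathcal G$; this gives (1)--(3), and the continuity of $F(\Lambda_t)$ and the Hausdorff continuity of $t\mapsto\Lambda_t$ follow from the explicit formulas used for the necks, cut--offs and ramps exactly as in \cite{KMN}.

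The step I expect to be the main obstacle is the area estimate (4) during the second stage. Throughout that stage the surface looks, macroscopically, like two nearly parallel copies of $\Sigma$ together with $k$ thin necks, so its area hovers just below $2\mathcal H^2(\Sigma)$ and the only slack is the instability deficit of \eqref{e:expansion}, of order $|\lambda_1|(\epsilon^2+\delta^2)$; one must verify that the cumulative cost of the $k$ necks, of the logarithmic cut--offs used to graft and then to grow them, and of the deformation implementing $\{R_t\}$ never exceeds that deficit. In particular this forces one to keep the two parallel sheets near the extreme heights $\epsilon\phi$ and $\delta\phi$ for as long as they are doubled --- so that \eqref{e:expansion} really does supply a deficit --- and never to let both sheets collapse onto $\Sigma$ simultaneously while still doubled, since the multiplicity--two surface $2\Sigma$ has area exactly $2\mathcal H^2(\Sigma)$, which is not below the threshold; and it pins down the neck waists, the cut--off scales and $\alpha_0$ as explicit functions of $k$, $\lambda_1$ and the geometry of $\Sigma$. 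This quantitative balancing is precisely the content of the proof of Theorem~2.3 in \cite{KMN}; the only genuinely new bookkeeping here is the minor one caused by $\delta,\epsilon$ having opposite signs and by $\mathcal G$ being a graph rather than a single point, neither of which affects the structure of the estimate.
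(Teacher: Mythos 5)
The proposal follows essentially the same approach as the paper: open $k$ necks via the log cut-off/catenoid trick, then use the retraction $\{R_t\}$ to collapse onto $\mathcal{G}$, with the area margin in (4) supplied by the instability deficit $\frac{\lambda_1}{2}(\epsilon^2+\delta^2)$ of the sheets $\Sigma_\epsilon,\Sigma_\delta$. The paper's own proof is just the remark that the asymmetric case $\delta<0<\epsilon$ follows from Theorem~2.3 of \cite{KMN} by trivial modifications, together with the explicit formulas for $\Phi_t$ and $\Gamma_s$, and your blind reconstruction identifies the same construction and the same quantitative balance (neck cost $O(\alpha_0^2/|\log\alpha_0|)$ versus $|\lambda_1|(\epsilon^2+\delta^2)$) that makes it work.
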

\begin{remark}
Proposition \ref{catenoid2} was proved \cite{KMN} in the special case $\delta=\epsilon$.  The general case of a non-symmetric tubular neighborhood follows with only trivial modifications.  
\end{remark}

Let us exhibit the sweepout that is asserted in Proposition \ref{catenoid2}.  For notational simplicity let us assume $k=1$ and $p_1=:x$.
First define for $0\leq t\leq R$ the logarithmic cutoff function $\eta_t:M\setminus B_{t^2}(x)\rightarrow\mathbb{R}$:
\begin{equation*}
    \eta_t(x) = \begin{cases}
               1      & r(x)\geq t\\
               (1/\log(t))(\log t^2-\log r(x))          & t^2\leq r(x)\leq t.
           \end{cases}
\end{equation*}
It follows from the proof of Theorem 2.3 in \cite{KMN} that $R$ can be chosen sufficiently small independently of $\epsilon$ and $\delta$.  Then we define for $0\leq t\leq R$ the sweepout:
\begin{align}
\Phi_{t}=&\{\exp_p(\epsilon\eta_t(p)\phi(p)N(p)) : p\in\Sigma\setminus B_{t^2}\}\cup\\& \{\exp_p(\delta\eta_t(p)\phi(p)N(p)) : p\in\Sigma\setminus B_{t^2}\},
\end{align}
where $N$ is a choice of unit normal vector on $\Sigma$. 

For $s\in [0,1]$ we then can consider the sweepout
\begin{align}
\Gamma_{s}=&\{\exp_{R_s(p)}(h_1(s)\eta_R(p)\phi(p)N(p)) : p\in\Sigma\setminus B_{R^2}\}\cup\\&\{\exp_{R_s(p)}(h_2(s)\eta_R(p)\phi(p)N(p)) : p\in\Sigma\setminus B_{R^2}\} 
\end{align}
where $h_1$ and $h_2$ are non-negative functions satisfying $h_1(0)=\epsilon$, $h_2(0)=\delta$, $h_1(1)=h_2(1)=0$ and are both decreasing very fast (so that equation 2.61 in \cite{KMN} is satisfied).

Note that the two surfaces $\Phi_R$ and $\Gamma_0$ agree. The desired sweepout $\Lambda_t$ is obtained by concatenating the sweepouts $\Gamma_s$ and $\Phi_t$.  The sweepout $\Phi_t$ ``opens up the hole" a definite amount, and the family $\Gamma_s$ retracts the resulting surface to a graph.

We now state the form of the Catenoid Estimate necessary when $\delta$ and $\epsilon$ have the same sign.  We will have to deal with such situations because of the two parameter families that we consider in this paper.
\begin{proposition}\label{catenoid3} \label{catenoidpos}
Let $M$ be a $3$-manifold and let $\Sigma$ be a closed orientable unstable embedded minimal surface in $M$ of genus $g$.  Denote by $\phi$ the lowest eigenfunction of the stability operator normalized so that $|\phi|_{L^2}=1$. Fix $p_1,p_2...p_k\in\Sigma$ and a graph $\mathcal{G}$ in $\Sigma$ so that there is a retraction $\{R_t\}_{t=0}^1$ of $\Sigma\setminus\{p_1,...p_k\}$ onto $\mathcal{G}$.  Then there exists $\alpha_1>0$ and $\tau_1>0$ so that whenever $\alpha_1\geq\epsilon> \delta\geq 0$ there exists a sweepout $\{\Lambda_t\}_{t=0}^1$ of $T_{\delta\phi,\epsilon\phi}(\Sigma)$ so that:
\begin{enumerate}
\item $\Lambda_t$ is a smooth surface of genus $2g+k-1$ (i.e. two copies of $\Sigma$ joined by $k$ necks)  for $t\in (0,1)$,
\item $\Lambda_0=\partial (T_{\delta\phi,\epsilon\phi}(\Sigma))\cup\bigcup_{i=1}^k\{\exp_{p_i}(s\phi(p_i)N(p_i)): s\in [\delta,\epsilon]\}$
\item $\Lambda_1=\{\exp_{p\in \mathcal{G}}(\delta N(p))\}$
\item For all $t\in [0,1]$, $\mathcal{H}^2(\Lambda_t)\leq 2\mathcal{H}^2(\Sigma)-\tau_1(\delta^2+\epsilon^2)$.
\end{enumerate}  
\end{proposition}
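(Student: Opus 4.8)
The plan is to follow the construction in the proof of Proposition~\ref{catenoid2} (Theorem 2.3 in \cite{KMN}), the only new feature being forced by the fact that the tube $T_{\delta\phi,\epsilon\phi}(\Sigma)$ now lies on one side of $\Sigma$: it is the solid region between $\Sigma_\delta:=\{\exp_p(\delta\phi(p)N(p))\}$ and $\Sigma_\epsilon:=\{\exp_p(\epsilon\phi(p)N(p))\}$, and does not contain $\Sigma$. So rather than pinching the two logarithmically cut-off parallel sheets together along $\Sigma$ (height $0$) near the $p_i$, as in \cite{KMN}, we hold the lower sheet at $\Sigma_\delta$ and let the upper sheet be pinched down to height $\delta\phi$ near the $p_i$. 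Everything else --- the logarithmic cut-off, the retraction, and the area bookkeeping --- goes through unchanged.

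Concretely, keeping the notation $\eta_t$ and $R$ of the construction displayed above ($R$ small, independent of $\delta,\epsilon$) and taking $k=1$, $p_1=x$ for simplicity, I would let the opening phase $\{\Phi_t\}_{0\le t\le R}$ have upper sheet
\[
\{\,\exp_p\left(\left(\delta+(\epsilon-\delta)\eta_t(p)\right)\phi(p)N(p)\right):p\in\Sigma\setminus B_{t^2}(x)\,\}
\]
and lower sheet $\{\exp_p(\delta\phi(p)N(p)):p\in\Sigma\setminus B_{t^2}(x)\}$, smoothed near the circle $\partial B_{t^2}(x)$ where they meet (the upper height interpolates from $\epsilon\phi$, where $\eta_t=1$, down to $\delta\phi$, where $\eta_t=0$). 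Since $\delta+(\epsilon-\delta)\eta_t\in[\delta,\epsilon]$, each $\Phi_t$ lies in $T_{\delta\phi,\epsilon\phi}(\Sigma)$; for $t\in(0,R]$ it is a smooth embedded surface built from two copies of $\Sigma$ with $k$ disks removed, glued along the $k$ boundary circles, hence of genus $2g+k-1$, which is (1); and as $t\to0$ the removed disks and the transition annuli $B_t(x)\setminus B_{t^2}(x)$ collapse, so $\Phi_0=\Sigma_\delta\cup\Sigma_\epsilon$ together with the $k$ segments $\{\exp_{p_i}(s\phi(p_i)N(p_i)):s\in[\delta,\epsilon]\}$, which is (2). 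For the retraction phase I would apply $\{R_s\}$ exactly as in \cite{KMN}, reparametrising $[0,1]\ni s$ onto $[R,1]$ with $\Gamma_0=\Phi_R$: the lower sheet stays at height $\delta\phi$ over $R_s(\Sigma\setminus B_{R^2})$, the bulk height of the upper sheet decreases from $\epsilon$ to $\delta$ through a function $h_1(s)$ chosen to decrease fast enough that the analogue of equation (2.61) in \cite{KMN} holds, and the endpoint is matched to the $1$-complex in (3). Concatenating the two phases gives the sweepout $\{\Lambda_t\}_{t\in[0,1]}$.

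For the area estimate (4), which is, term by term, that of \cite{KMN}, the instability of $\Sigma$ enters as follows. Since $\Sigma$ is minimal the first variation of area vanishes, and since $\Sigma$ is unstable its stability operator has lowest eigenvalue $\lambda_1<0$ with normalized eigenfunction $\phi$; hence by the second variation formula, for every $a$ with $|a|\le\alpha_1$,
\[
\mathcal{H}^2\left(\{\exp_p(a\phi(p)N(p))\}\right)=\mathcal{H}^2(\Sigma)+\tfrac{a^2}{2}\lambda_1+O(a^3)\le\mathcal{H}^2(\Sigma)-\tfrac{|\lambda_1|}{4}a^2
\]
once $\alpha_1$ is small. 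Running the same computation on the cut-off upper sheet, the only singular term comes from the transition annulus and is controlled by the standard logarithmic cut-off estimate $\int_\Sigma|\nabla\eta_t|^2\phi^2=O(1/|\log t|)$ (deleting the disks $B_{t^2}(p_i)$ only helps), so its area too is at most $\mathcal{H}^2(\Sigma)-\tfrac{|\lambda_1|}{4}\epsilon^2$ once $R$ and $\alpha_1$ are fixed small independently of $\delta,\epsilon$. Adding the lower sheet $\Sigma_\delta$, and using that the retraction phase obeys the bound of \cite{KMN} by the rapid decay of $h_1$, we get $\mathcal{H}^2(\Lambda_t)\le 2\mathcal{H}^2(\Sigma)-\tau_1(\delta^2+\epsilon^2)$ for all $t$, with, say, $\tau_1=|\lambda_1|/8$.

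The only genuinely new point relative to \cite{KMN} is keeping the moving surface inside the \emph{one-sided} tube $T_{\delta\phi,\epsilon\phi}(\Sigma)$, which is exactly what dictates the pinching at height $\delta\phi$ and the lower sheet being held at $\Sigma_\delta$; none of the area estimates are affected by this. I expect the main point still to be checked is the area bound throughout the retraction phase --- one must not surrender the $\epsilon^2$ saving before $\{R_s\}$ has retracted enough to have pushed the area well below $2\mathcal{H}^2(\Sigma)-\tau_1(\delta^2+\epsilon^2)$ --- but this is handled precisely as in \cite{KMN}, by letting $h_1$ decrease rapidly so that the height factor has already shrunk wherever $R_s$ begins to stretch the sheets, and the fact that the two sheets lie on the same side of $\Sigma$ plays no role in that argument.
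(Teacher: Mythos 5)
Your proposal is correct and follows essentially the same construction as the paper: the opening phase $\Phi_t$ with upper sheet at height $(\delta+(\epsilon-\delta)\eta_t)\phi$ and lower sheet held flat at $\delta\phi$, followed by the retraction phase $\Gamma_s$, with the area bound inherited from the argument of Theorem 2.3 in \cite{KMN}. The only deviation is cosmetic: you keep the lower sheet uncut throughout the retraction, which is actually the more internally consistent choice (the paper's displayed formula for $\Gamma_s$ inserts a spurious $\eta_R$ factor in the lower sheet that does not match its own $\Phi_R$), and your second-variation bookkeeping for the $\tau_1(\delta^2+\epsilon^2)$ saving is sound up to constants.
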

\begin{proof}
Similarly as in Proposition \ref{catenoidinthreemanifold} we can take for $R$ suitably small and $t\in [0,R]$,
\begin{align}
\Phi_{t}=&\{\exp_p((\delta+(\epsilon-\delta)\eta_t(p))\phi(p)N(p)) : p\in\Sigma\setminus B_{t^2}\}\cup\\& \{\exp_p(\delta\phi(p)N(p)) : p\in\Sigma\setminus B_{t^2}\}.\label{second}
\end{align}
Note that the second piece of the surface $\Phi_t$ defined in \eqref{second} is not logarithmically cut off.  For $s\in [0,1]$ we then can consider the sweepout
\begin{align}
\Gamma_{s}=&\{\exp_{R_s(p)}((\delta+h_1(s))\eta_R(p)\phi(p)) : p\in\Sigma\setminus B_{R^2}\}\cup\\&\{\exp_{R_s(p)}(\delta\eta_R(p)\phi(p)) : p\in\Sigma\setminus B_{R^2}\} 
\end{align}
where $h_1$ is a non-negative decreasing function satisfying $h_1(0)=\epsilon-\delta$, $h_1(1)=0$ and so that $h_1$ approaches $0$ at $1$ sufficiently rapidly.

Again since $\Phi_R$ and $\Gamma_0$ agree, by concatenating the sweepouts $\Phi_t$ and $\Gamma_s$ we obtain the desired sweepout $\Lambda_t$.  As in the proof of the Catenoid Estimate (Theorem 2.3 in \cite{KMN}), (4) holds so long as $\alpha_1$ and $R$ are taken sufficiently small.
\end{proof}

Let $f(x)$ be a strictly increasing $C^1$ function defined near $0$ so that $f'\geq C>0$. By the fundamental theorem of calculus, we obtain for any $h>0$, $f(x+h)-f(x)\geq hC$.  Define the surfaces
\begin{equation}
\Lambda(\epsilon,\epsilon+\eta):=\partial (T_{f(\epsilon)\phi,f(\epsilon+\eta)\phi}(\Sigma))\cup\bigcup_{i=1}^k\{\exp_{p_i}(s\phi(p_i)N(p_i)): s\in [f(\epsilon),f(\epsilon+\eta)]\}.
\end{equation}
Let us consider the family of surfaces $\Lambda(\epsilon, \epsilon+\eta)$ as $\epsilon$ varies near $0$.

From the above discussion (combining Proposition \ref{catenoidinthreemanifold} and Proposition \ref{catenoidpos}) we have the following {\it catenoid estimate with parameter} (where $\Sigma$ and $\phi$ are defined as in the statement of Proposition \ref{catenoidinthreemanifold})

\begin{proposition}\label{catenoidoneparam} (Catenoid Estimate with Parameter)
There exists $\alpha_2> 0$ and $\tau_2>0$ so that when $\eta$ is small enough, there is a two-parameter family of surfaces $\Gamma_{s,t}$ parameterized by $(s,t)\in A := [-\alpha_2,\alpha_2-\eta]\times [0,1]$ so that 
\begin{enumerate}
\item $\Gamma_{s,0}=\Lambda(s, s+\eta)$ for $s\in [\alpha_2, \alpha_2-\eta]$
\item The genus of $\Gamma_{s,t}$ is $2g+k-1$ for $(s,t)$ in the interior of $A$
\item $\Gamma_{s,1}$ consists of $1$-d graphs for $s\in [\alpha_2, \alpha_2-\eta]$
\item $\sup_{(s,t)\in A}\mathcal{H}^2(\Gamma_{s,t})\leq 2\mathcal{H}^2(\Sigma)-\tau_2\eta^2$.
\end{enumerate}
\end{proposition}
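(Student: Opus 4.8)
The plan is to define $\Gamma_{s,t}$ separately on $s$-slices by invoking Proposition~\ref{catenoidinthreemanifold} or Proposition~\ref{catenoidpos} with $\delta=f(s)$ and $\epsilon=f(s+\eta)$, and then to check that the slices fit together and satisfy a uniform area bound. First I would pick $\alpha_2>0$ small enough that $f$ is $C^1$, strictly increasing, and satisfies $f'\ge C$ on $[-\alpha_2,\alpha_2]$, and also small enough that $f([-\alpha_2,\alpha_2])\subseteq[-\alpha_*,\alpha_*]$, where $\alpha_*:=\min(\alpha_0,\alpha_1)$ is the smaller of the constants furnished by Propositions~\ref{catenoidinthreemanifold} and~\ref{catenoidpos}. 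Then I would restrict to $0<\eta<\alpha_2$, so that $A=[-\alpha_2,\alpha_2-\eta]\times[0,1]$ is a genuine rectangle and both $f(s)$ and $f(s+\eta)$ lie in $[-\alpha_*,\alpha_*]$ for every admissible $s$. Since $f$ is strictly increasing one has $f(s)<f(s+\eta)$ for all such $s$, and according to the sign of $f$ near $0$ the $s$-range splits into at most three subintervals, on which respectively (i) $0\le f(s)<f(s+\eta)$, (ii) $f(s)<0<f(s+\eta)$, or (iii) $f(s)<f(s+\eta)\le 0$.

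On (ii) I would apply Proposition~\ref{catenoidinthreemanifold} with $(\delta,\epsilon)=(f(s),f(s+\eta))$ and take $\Gamma_{s,\cdot}$ to be the sweepout it produces; its conclusions give $\Gamma_{s,0}=\Lambda(s,s+\eta)$ (which is precisely how $\Lambda(s,s+\eta)$ was defined), genus $2g+k-1$ for $t\in(0,1)$, $\Gamma_{s,1}=\mathcal{G}$ a union of $1$-d graphs, and the area bound $\mathcal{H}^2(\Gamma_{s,t})\le 2\mathcal{H}^2(\Sigma)-\tau_0\bigl(f(s)^2+f(s+\eta)^2\bigr)$. On (i), where $\alpha_1\ge f(s+\eta)>f(s)\ge 0$, I would instead apply Proposition~\ref{catenoidpos} with $(\delta,\epsilon)=(f(s),f(s+\eta))$; now $\Gamma_{s,1}=\{\exp_{p\in\mathcal{G}}(f(s)N(p))\}$, still a union of $1$-d graphs and equal to $\mathcal{G}$ when $f(s)=0$, and the area bound has $\tau_1$ in place of $\tau_0$. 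On (iii) I would apply Proposition~\ref{catenoidpos} after the substitution $N\mapsto -N$, i.e.\ with $(\delta,\epsilon)=(-f(s+\eta),-f(s))$; since the tubular neighborhood $T_{f(s)\phi,f(s+\eta)\phi}(\Sigma)$ and the surface $\Lambda(s,s+\eta)$ are visibly invariant under that substitution, this again yields a sweepout of the right region starting at $\Lambda(s,s+\eta)$, of genus $2g+k-1$ for $t\in(0,1)$, and ending at $\{\exp_{p\in\mathcal{G}}(f(s+\eta)N(p))\}$, which equals $\mathcal{G}$ when $f(s+\eta)=0$. This establishes conclusions (1)--(3) of Proposition~\ref{catenoidoneparam} on each subinterval.

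For conclusion (4) I would use the fundamental-theorem-of-calculus inequality $f(s+\eta)-f(s)\ge C\eta$. On (i), $f(s+\eta)\ge C\eta$, so $f(s)^2+f(s+\eta)^2\ge C^2\eta^2$; on (iii), symmetrically, $f(s)^2+f(s+\eta)^2\ge C^2\eta^2$; on (ii), $|f(s)|+|f(s+\eta)|=f(s+\eta)-f(s)\ge C\eta$, whence $f(s)^2+f(s+\eta)^2\ge\tfrac12 C^2\eta^2$. Feeding these into the area bounds above gives $\mathcal{H}^2(\Gamma_{s,t})\le 2\mathcal{H}^2(\Sigma)-\tfrac12 C^2\min(\tau_0,\tau_1)\,\eta^2$ uniformly on $A$, so $\tau_2:=\tfrac12 C^2\min(\tau_0,\tau_1)$ works.

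The step I expect to be the main obstacle is verifying that $(s,t)\mapsto\Gamma_{s,t}$ is continuous on $A$ and smooth on its interior, especially across the (at most two) values of $s$ where the subinterval regime changes. For this I would use the explicit formulas for the sweepouts written out after Propositions~\ref{catenoidinthreemanifold} and~\ref{catenoidpos}: each $\Gamma_{s,t}$ is the concatenation of the family $\Phi_\bullet$ that logarithmically opens up the necks at $p_1,\dots,p_k$ with the retraction family $\Gamma_\bullet$, all built from a single cutoff radius $R$ that, as noted in the excerpt, can be chosen independently of $\delta$ and $\epsilon$, hence independently of $s$. These formulas depend continuously on $(\delta,\epsilon)$ and smoothly for $t$ in the interior, so $\Gamma_{s,t}$ is continuous on each subinterval since $s\mapsto(f(s),f(s+\eta))$ is $C^1$. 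At a transition value one has $f(s)=0$ (between (i) and (ii)) or $f(s+\eta)=0$ (between (ii) and (iii)); in that limit the sheet of the Proposition~\ref{catenoidpos} construction that is not logarithmically cut off---the translate $\{\exp_p(\delta\phi(p)N(p)):p\in\Sigma\setminus B_{t^2}\}$, or its reflection---has vanishing normal displacement and so becomes $\Sigma\setminus B_{t^2}$, which is exactly the corresponding degenerate sheet of the Proposition~\ref{catenoidinthreemanifold} construction; likewise the endpoints match since $\{\exp_{p\in\mathcal{G}}(0\cdot N(p))\}=\mathcal{G}$. Thus the three families agree along the transition loci and glue to a single family with the asserted properties; this is the only point where the ``trivial modifications'' of the remark following Proposition~\ref{catenoidinthreemanifold} need to be made explicit.
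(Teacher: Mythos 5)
Your proposal is correct and follows essentially the same route as the paper's own (very terse) proof: the paper simply asserts that the families from Propositions~\ref{catenoidinthreemanifold} and~\ref{catenoidpos} ``can be pasted together continuously'' and that $f(s+\eta)-f(s)\geq\eta C$ forces one component of $\Lambda(s,s+\eta)$ a distance $\geq C\eta/2$ from $\Sigma$, implying (4). You fill in precisely the details the paper elides---the three sign-regimes for $(f(s),f(s+\eta))$, the reflection $N\mapsto -N$ to reuse Proposition~\ref{catenoidpos} when both are negative, the explicit check that the two formulas for $\Phi_t$ and $\Gamma_s$ coincide at the transition loci $f(s)=0$ and $f(s+\eta)=0$, and the quantitative lower bound $f(s)^2+f(s+\eta)^2\geq\tfrac12 C^2\eta^2$---so this is the same argument, just written out.
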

\begin{proof}
The families produced by Proposition \ref{catenoidpos} and \ref{catenoidinthreemanifold} can be pasted together continuously.  Since $f(s+\eta)-f(s)\geq\eta C$, at least one of the two components of $\Lambda(s, s+\eta)$ has distance from $\Sigma$ at least $C\eta/2$.  This implies (4).
\end{proof}

\begin{remark} We will apply Proposition \ref{catenoidoneparam} in a bounded subset of Gaussian $\R^3$ (where the Gaussian metric is smooth) and where $\Sigma$ is the self-shrinking two-sphere.  Since Proposition \ref{catenoidinthreemanifold} is a purely local statement, there is no need to deal with issues at infinity. 
\end{remark}

\section{Proof of Theorem \ref{main}}
We will construct the genus five surface (i.e. the ``doubled cube") as the other surfaces follow by making trivial changes. 

Consider the octahedral group $O_{24}\subset SO(3)$ acting on $\R^3$ and the $O_{24}$-equivariant one parameter sweepout $\{\Phi_t\}_{t\in [0,1]}$ of $\R^3$ by concentric round two-spheres centered at the origin:   
\begin{equation}
\Phi_t:=\{(x,y,z)\in\mathbb{R}^3\:|\:x^2+y^2+z^2 = (\tan(\pi t/2))^2\}.
\end{equation}
Denote by $\Pi_\Phi$ the saturation of sweepouts obtained from the family $\{\Phi_t\}$ that are also equivariant under the group $O_{24}$.  Each sweepout surface in $\Pi_\Phi$ consists of two-spheres except for at $t=0$ and $t=1$ where the surfaces are trivial.  

We define the equivariant width of this homotopy class to be:
\begin{equation}
\omega_1=\inf_{\Gamma\in\Pi_\Phi}\sup_{t\in [0,1]} F(\Gamma_t)
\end{equation}

By Theorem \ref{equivariant} there exists an $O_{24}$-equivariant self-shrinker $\Sigma_1$ (potentially with multiplicity) whose total mass is $\omega_1$. 

We first observe the following simple lemma:

\begin{lemma}\label{oneparam}
$\Sigma_1=\mathbb{S}_*^2$.
\end{lemma}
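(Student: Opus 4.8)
The plan is to apply the equivariant Min-Max Theorem (Theorem \ref{equivariant}) to the sweepout $\{\Phi_t\}$ and then use the classification of genus zero self-shrinkers to pin down the limit. First I would check the hypothesis of Theorem \ref{equivariant}: the boundary surfaces at $t=0$ and $t=1$ are trivial (degenerate to points, or empty sets), so $\sup_{t\in\partial I}F(\Phi_t)=0$, while the width $\omega_1$ is positive since $\{\Phi_t\}$ passes through the round two-sphere of radius $2$, which has positive Gaussian area, and no equivariant isotopy of the family can push all slices to have vanishing area (any genuine sweepout must contain a slice separating the origin from infinity, hence with definite area — one can make this precise using a monotonicity/linking argument, or simply quote the lower width bound from \cite{KZ}). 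Thus some min-max sequence converges as varifolds to $k\Gamma$ where $\Gamma$ is a connected, smooth, embedded, $O_{24}$-equivariant self-shrinker, with the genus bound $k\cdot g(\Gamma)\le g=0$, so $g(\Gamma)=0$.

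Next I would invoke Brendle's classification \cite{B}: the only smooth embedded self-shrinkers in $\R^3$ of genus zero are the plane through the origin, the cylinder, and the round sphere $\mathbb{S}^2_*$ of radius $2$ (equivalently one may use Bernstein--Wang \cite{BW}). Of these, only $\mathbb{S}^2_*$ is compact; and among the three, I need to rule out the plane and the cylinder as limits of the equivariant sweepout by spheres. The key step is a topological/degree argument: each nondegenerate slice $\Phi_t$ is a round sphere bounding a ball $B_t$ with the origin inside, and the family connects the trivial surface near the origin to the trivial surface near infinity, so it is a genuine sweepout of $\R^3$ in the sense that the associated family of Caccioppoli sets (the enclosed balls) sweeps from $\emptyset$ to all of $\R^3$; in particular any varifold limit of a min-max sequence, taken with multiplicity, must have mass at least the Gaussian area of $\mathbb{S}^2_*$, and moreover the limit cannot be a plane or cylinder because these are noncompact and are disjoint from large regions of $\R^3$ while the sweepout slices are large round spheres.

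Here is where I would be most careful, and I expect this to be the main obstacle: ruling out the noncompact shrinkers requires more than naive area comparison, since as noted in the introduction planes can and do appear as min-max limits of non-equivariant sphere-sweepouts (Example 2 in \cite{KZ}). The point is that the $O_{24}$-symmetry saves us: a plane through the origin and a cylinder both have a nontrivial isotropy requirement, but more importantly, an $O_{24}$-invariant self-shrinker that is either a plane or a cylinder is impossible — there is no $O_{24}$-invariant plane through the origin (the octahedral group fixes no line, so it fixes no plane setwise unless the plane is not through a distinguished axis; one checks directly that $O_{24}$ acts irreducibly on $\R^3$, hence fixes no proper nonzero subspace, so no invariant plane and no invariant line, hence no invariant cylinder). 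Therefore the only $O_{24}$-equivariant connected genus zero self-shrinker is $\mathbb{S}^2_*$, forcing $\Gamma=\mathbb{S}^2_*$. Finally, the multiplicity $k$: since $g(\Gamma)=0$ the genus bound gives no information on $k$, but a direct area computation shows $\omega_1=F(\mathbb{S}^2_*)$ — the sweepout $\{\Phi_t\}$ itself has $\sup_t F(\Phi_t)=F(\mathbb{S}^2_*)$ because the Gaussian area of concentric spheres is maximized at radius $2$ (a one-variable calculus check on $r\mapsto r^2 e^{-r^2/4}$), so $\omega_1\le F(\mathbb{S}^2_*)$; combined with the lower bound $\omega_1\ge F(\mathbb{S}^2_*)$ from the varifold convergence $k\cdot F(\mathbb{S}^2_*)\le \omega_1$ with $k\ge 1$, we get $k=1$ and $\Sigma_1=\mathbb{S}^2_*$.
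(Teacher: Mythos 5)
Your proposal is correct and takes essentially the same route as the paper: apply Theorem \ref{equivariant}, use the genus bound to get $g(\Gamma)=0$, invoke Brendle's classification, observe that the irreducibility of the $O_{24}$-action on $\R^3$ rules out invariant planes and cylinders so that $\Gamma=\mathbb{S}^2_*$, and finally rule out multiplicity by noting that the explicit sweepout $\{\Phi_t\}$ achieves its maximum at $r=2$ with Gaussian area exactly $F(\mathbb{S}^2_*)$. The paragraph-long detour worrying about degree arguments and Caccioppoli sets is unnecessary --- you correctly identify that the equivariance constraint, not any area or topological comparison, is what excludes the plane and cylinder --- but it does not affect the validity of the argument you land on.
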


\begin{proof}
The genus of the minimal surface $\Sigma_1$ is $0$ by Theorem \ref{equivariant}.  By the classification of Brendle \cite{B}, $\Sigma_1$ is either a plane, cylinder or sphere.  Among these only the sphere is $O_{24}$-equivariant.  Multiplicity is ruled out because the explicit sweepout $\Phi_t$ considered above has strictly less area.  
\end{proof}

\begin{remark}
As an alternative proof to Lemma \ref{oneparam}, observe that Bernstein-Wang \cite{BW} have proved that the shrinker with smallest Gaussian area above the flat planes is the self-shrinking two-sphere $\mathbb{S}^2_*$.   Thus since the foliation by parallel planes is not $O_{24}$-equivariant and $\Phi_t$ is an $O_{24}$-equivariant family, it follows that $\Phi_t$ must be an optimal sweepout. 
\end{remark}

We will now construct a two-parameter family of surfaces $\Phi_{t,s}$ parameterized by $(t,s)\in [0,\infty)\times [0,\infty)$.  Loosely speaking, the family will consist of a sphere at radius $t$, and a sphere at radius $s$ joined by six small necks attached along the coordinate axes $O_{24}$-equivariantly.  Along the diagonal where $t=s$ we ``open up" the $6$ necks and retract the surfaces to a graph in order to obtain a smooth sweepout.  All the surfaces then have genus $5$ except toward parts of the boundary of the sweepout where the surfaces are spheres, and along the diagonal $t=s$ where one has retracted the surfaces to 1-d graphs.  

By the Catenoid Estimate (Proposition \ref{catenoidoneparam}) we can construct such a sweepout with all areas less than $2\omega_1$.   Indeed, the problematic slices are those occurring when $s$ and $t$ are both close to the value $T$ so that $\Phi_T$ is a self-shrinking two-sphere and the sweepout surfaces $\Phi_{t,s}$ are very close to a multiplicity $2$ unstable self-shrinking sphere.  Here we ``open up the holes" and retract to a graph using the Catenoid Estimate to ensure all areas of the sweepout have areas strictly below $2\omega_1$. Let us give more details.  

Consider the round unit sphere $\mathbb{S}^2\subset\R^3$ and the set $S_0$ consisting of the six points on $\mathbb{S}^2$ of intersection with the three coordinate axes.  There exists an $O_{24}$-equivariant family $S_t$ of $6$ closed curves starting at the point curves $S_0$ and ending at twice the one-skeleton $S_1$ of the spherical cube (i.e, the tesselation of $\mathbb{S}^2$ by $6$ squares).  This family gives a retraction of $\mathbb{S}^2\setminus S_0$ onto the one-dimensional graph $S_1$.

For $a,b,t\in [0,1]$ let us denote the truncated cone over $S_t$ by 
\begin{equation}
C_t^{a,b}=\{\lambda x : x\in S_t, \lambda\in\tan([\pi a/2,\pi b/2])\}.
\end{equation} 

For any $s,t \in [0,1]$, denote by $\tilde{\Phi}_{t,s}$ the surface $\Phi_s$ where the six components of $\Phi_s\setminus C_t^{s,s}$ containing the points $C_0^{s,s}$ are removed.  
We can now define a continuously varying three-parameter family of surfaces (for $a,b\in [0,1]$ and $\eta\in [0,1]$)
\begin{equation}
\Phi'_{a,b,\eta}=\tilde{\Phi}_{a,\eta}\cup\tilde{\Phi}_{b,\eta}\cup C^{a,b}_\eta
\end{equation}

The surfaces in the family $\Phi'$ have genus five when $\eta>0$ and $a,b\in (0,1)$ as they consist of the sphere $\Phi_a$ and the sphere $\Phi_b$ where six disks have been removed from each and a conical region connecting them based over $S_t$ glued in.

We can now define a two-parameter family parameterized by $(t,s)\in [0,1]\times [\epsilon_1,1]$ given by:
\begin{equation}\label{twofam}
\Phi_{t,s}=\Phi'_{t(1-s),t+s(1-t),\eta(s,t)},
\end{equation}
where $\epsilon_1$ will be chosen later.  The smooth function $\eta(t,s):[0,1]\times [\epsilon_1,1]\rightarrow [0,1]$ satisfies the following properties:
\begin{enumerate}
\item $\eta(t,\epsilon_1)=0$ and $\eta(t,1)=0$ for all $t$
\item $\eta(0,s)=0$ and $\eta(1,s)=0$ for all $s$
\item $\eta>0$ on $(0,1)\times (\epsilon_1,1)$.
\end{enumerate}
By choosing $\eta(t,s)$ appropriately small where it is non-zero we can also guarantee that for some $C>0$
\begin{equation}\label{area1}
\sup_{(t,s)\in [0,1]\times [\epsilon_1,1]} F(\Phi_{t,s})\leq 2F(\mathbb{S}^2_*)-C\epsilon_1^2.
\end{equation}

For each $t$, the family $s\rightarrow\Phi_{t,s}$ starts at $s=\epsilon_1$ as (nearly) a multiplicity two copy of $\Phi_t$ and then the two spherical pieces start to move off in different directions until finally $s=1$ when one is left again with a graph as the neck region has gone to zero ($\eta(1,s)=0$) and the spherical pieces have disappeared (one to the origin, the other to infinity).  

We will now extend the sweepout $\Phi_{t,s}$ defined on $[0,1]\times [\epsilon_1,1]$ to one (not relabeled) defined on $[0,1]\times [0,1]$ by ``opening up the holes" of the surfaces $[0,1]\times\{\epsilon_1\}$ using the retraction.   The slices that cause a problem are those with areas close to $2F(\mathbb{S}_*^2)$ as ``opening the holes" for such surfaces would push the areas over $2F(\mathbb{S}_*^2)$ if we did not use the Catenoid Estimate.   Denote by $T$ the unique value in $[0,1]$ for which $\tan(\pi T/2)=4$ so that $\Phi_T$ is the self-shrinking $2$-sphere.

We now apply the Catenoid Estimate (Proposition \ref{catenoidoneparam}) with $\Sigma=\mathbb{S}^2_*$, the retraction $S_t$, $\phi=1$, where $f(x)=\tan(\pi/2(T(1-\epsilon_1)+x(1-\epsilon_1)))$ and with $\eta=\epsilon_1/(1-\epsilon_1)$ (shrinking $\epsilon_1$ if necessary).   Let $\alpha_2$ be the constant provided by the Catenoid Estimate.  The sweepout provided by the Catenoid Estimate extends $\Phi_{t, s}$ to the rectangle $A:=[T-\alpha_2,T+\alpha_2-\eta]\times [0,\epsilon_1]$ with the property that for some $C>0$,
\begin{equation}\label{area2}
\sup_{(t,s)\in A} F(\Phi_{t,s})\leq 2F(\mathbb{S}^2_*)-C\epsilon_1^2.
\end{equation}

One needs finally to extend the sweepout $\Phi_{t,s}$ to the regions $[0,T-\alpha_2]\times [0,\epsilon_1]$ and to $[T+\alpha_2-\eta,1]\times [0,\epsilon_1]$.  To handle these regions, observe that for fixed but arbitrary $t\in [0,T-\alpha_2]$ or $t\in[T+\alpha_2-\eta,1]$ both spherical components of $\Phi_{t,\epsilon_1}$ have radius a  definite distance away from that of $\mathbb{S}^2_*$, and thus while the retraction may increase area slightly along the way, if we further decrease $\epsilon_1$, the area of the extension will stay below $2F(\mathbb{S}^2_*)$.  Note that the choice of $\alpha_2$ and equations \eqref{area1}, and \eqref{area2} remain valid if we decrease $\epsilon_1$.  In light of this observation and \eqref{area1}, and \eqref{area2}, we produce a sweepout $\Phi_{t,s}$ with the property that 
\begin{equation}\label{lessthan}
\sup_{(t,s)\in [0,1]\times[0,1]} F(\Phi_{t,s})<2F(\mathbb{S}^2_*).
\end{equation}

Note that the sweepout in the interior of $I^2$ consists of genus $5$ surfaces except at the interval $[0,1]\times\{\epsilon_1\}$ where it consists of two spheres.  By a small perturbation, we can open up the necks so that every surface associated to an interior point of $[0,1]\times [0,1]$ has genus $5$.

\begin{remark}\label{special}
The family $\Phi_{t,s}$ has the following key property.  Modulo the addition of arcs contributing no area, $\Phi_{0,s}=\Phi_s$ and $\Phi_{1,s}=\Phi_{1-s}$.  Thus we have the identification $\Phi_{0,s}=\Phi_{1,1-s}$.  Because of these identifications, we can think of $\Phi_{t,s}$ being a sweepout parameterized by a M{\"o}bius band.  Moreoever, suppose we have a curve $(t(\tau),s(\tau))_{\tau=0}^1$ with the property that $t(0)=0$ and $t(1)=1$ and $s(0)=1-s(1)$. Let us call such a curve $(t(\tau),s(\tau))_{\tau=0}^1$ a  {\it cycle}.  The family of surfaces $\{\Phi_{t(\tau),s(\tau)}\}_{\tau=0}^1$ corresponding to a cycle is a non-trivial sweepout of $\R^3$.  
\end{remark}

Consider the min-max value among $O_{24}$-equivariant two parameter sweepouts homotopic to the family $\Psi_{t,s}$.
\begin{equation}
\omega_2=\inf_{\Gamma\in\Pi_\Psi}\sup_{(t,s)\in [0,1]^2} F(\Phi_{t,s}).
\end{equation}
It follows by definition that $\omega_2\geq\omega_1$.  Moreoever, from \eqref{lessthan} we obtain
\begin{lemma}\label{catenoid}
$\omega_2 < 2\omega_1$.
\end{lemma}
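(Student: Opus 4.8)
The plan is to extract the inequality $\omega_2 < 2\omega_1$ directly from the sweepout $\Phi_{t,s}$ constructed just above, together with Lemma~\ref{oneparam} which identifies $\Sigma_1 = \mathbb{S}^2_*$ and hence $\omega_1 = F(\mathbb{S}^2_*)$. First I would observe that the two-parameter family $\Phi_{t,s}$ built in the preceding paragraphs is an $O_{24}$-equivariant genus-$5$ sweepout lying in the homotopy class $\Pi_\Psi$ (indeed it \emph{defines} that class): on the interior of $[0,1]^2$ its slices are genus-$5$ surfaces after the small perturbation opening the necks, and on $\partial I^2$ its slices degenerate to spheres or $1$-complexes, so it is an admissible competitor for the min-max value $\omega_2$. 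By the definition of $\omega_2$ as an infimum of $\sup$-areas over this saturation, we have
\begin{equation}
\omega_2 \;\le\; \sup_{(t,s)\in [0,1]^2} F(\Phi_{t,s}).
\end{equation}

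Next I would invoke the key estimate \eqref{lessthan}, which was established by combining \eqref{area1}, \eqref{area2}, and the Catenoid Estimate with Parameter (Proposition~\ref{catenoidoneparam}) applied with $\Sigma = \mathbb{S}^2_*$. That inequality states precisely that $\sup_{(t,s)\in [0,1]^2} F(\Phi_{t,s}) < 2F(\mathbb{S}^2_*)$. Chaining this with the previous display gives $\omega_2 < 2 F(\mathbb{S}^2_*)$. Finally, by Lemma~\ref{oneparam} the one-parameter equivariant min-max limit $\Sigma_1$ is exactly $\mathbb{S}^2_*$, so $\omega_1 = F(\Sigma_1) = F(\mathbb{S}^2_*)$, and substituting yields $\omega_2 < 2\omega_1$, as claimed.

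The only genuine content here is already done in the construction of $\Phi_{t,s}$: the real work was arranging the function $\eta(t,s)$ small enough that \eqref{area1} holds, and then applying the non-symmetric Catenoid Estimate near the diagonal $t=s$ (where the slices approach the multiplicity-two sphere) to get \eqref{area2} rather than the naive bound $2F(\mathbb{S}^2_*)$ which "opening up the holes" would otherwise force. Thus the main obstacle is conceptual rather than present in this lemma's proof: one must be confident that the catenoidal neck opening, handled via the logarithmically cut-off parallel surfaces of Proposition~\ref{catenoidinthreemanifold} and its same-sign variant Proposition~\ref{catenoidpos}, keeps all areas strictly below twice the sphere's area uniformly in the two parameters. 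Granting the construction and \eqref{lessthan}, the lemma itself is a one-line consequence: competitor existence bounds $\omega_2$ from above by $\sup F(\Phi_{t,s})$, and $\sup F(\Phi_{t,s}) < 2\omega_1$ by \eqref{lessthan} and Lemma~\ref{oneparam}.
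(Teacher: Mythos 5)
Your proposal is correct and coincides with the paper's argument: the paper derives the lemma directly from \eqref{lessthan} using the explicit competitor sweepout $\Phi_{t,s}$, together with the identification $\omega_1 = F(\mathbb{S}^2_*)$ from Lemma~\ref{oneparam}. Nothing is missing; the heavy lifting is indeed done in constructing $\Phi_{t,s}$ and establishing \eqref{area1}, \eqref{area2}, \eqref{lessthan}, exactly as you note.
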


Let us assume for the moment that $\omega_2> \omega_1$.  In this case, since $\omega_1$ is the maximum of areas of slices at the boundary of the sweepout $\Phi$, the Min-Max Theorem \ref{equivariant} applies to produce a self-shrinker $\Sigma_2$ whose total mass (counted with multiplicity $n$) is $\omega_2$.   

From the Min-Max Theorem \ref{minmax} it follows that after finitely compressions on a min-max sequence, and after throwing away finitely many components, the min-max sequence consists of $n$ parallel copies of $\Sigma_2$.  There are only three equivariant compression that can occur.  The first possibility (i) is that the min-max sequence collapses with multiplicity $2$ or $1$ to the sphere.  Multiplicity two is ruled out by Lemma \ref{catenoid} and multiplicity one is ruled out by the assumption $\omega_2>\omega_1$. The second possibility (ii) is that the surfaces collapse into several disjoint spheres, but then by the genus bounds the min-max limit would have to be a multiple of a sphere, which by Brendle \cite{B} must be $\mathbb{S}^2_*$.  This cannot happen since the width is strictly between $\omega_1$ and $\omega_2$.  The third possibility (iii) is that the outer spheres pushes off to infinity and the catenoidal necks push out to become ends.  This results in a genus $0$ surface with $6$ ends.  By the classification of Brendle \cite{B}, this is again impossible since then $\Sigma_2$ would have to be the closed two-sphere $\mathbb{S}^2_*$. The conclusion is that no genus collapse can occur and the genus of $\Sigma_2$ is five.  It also follows that $\Sigma_2$ is achieved with multiplicity $1$.   Of course, it is still possible that $\Sigma_2$ has developed multiple ends.

Let us give more details and show that degenerations i), ii) and iii) are the only possible ones. To do this, it helps to consider the surfaces $\Phi_{t,s}/O_{24}$ projected down to the singular quotient space $\R^3/O_{24}$ via the natural projection $\pi:\R^3\rightarrow\R^3/O_{24}$.  It is then enough to classify compressions in the quotient that lift to compressions in $\R^3$.  These are precisely compressions along curves that bound disks centered about an arc of the singular set, i.e., that intersect the singular set in one point, or about disks that are entirely disjoint from the singular set.  

The space $\R^3/O_{24}$ can be thought of as an orbifold with underlying space $\R^3$ but with a singular set consisting of rays of isotropy $\mathbb{Z}_2$, $\mathbb{Z}_3$ and $\mathbb{Z}_4$ emanating from the origin.  The surfaces $\Phi_{t,s}/O_{24}$ are topologically two-spheres.  To see how they intersect the singular set, consider two concentric spheres centered about the origin in $\R^3/O_{24}$ and so intersecting each singular half-line two times.  Then add a neck along the half-line of $\mathbb{Z}_4$ isotropy to connect the two spheres (so that now the resulting connected sphere is disjoint entirely from the singular set $\mathbb{Z}_4$ but intersects each arc of the singular set with isotropy $\mathbb{Z}_2$ and $\mathbb{Z}_3$ still twice).  Such surfaces are isotopic to $\Phi_{t,s}/O_{24}$.  By adding a point at infinity to we can think of $\Phi_{t,s}/O_{24}$ as genus zero Heegaard splitting of $\mathbb{S}^3\simeq (\R^3/O_{24})\cup\{\infty\}$.  Denote by $B_1$ the three-ball bounded by $\Phi_{t,s}/O_{24}$ that is disjoint from $\{\infty\}$, and by $B_2$ the other three-ball.  

Note that given a closed surface in $\Phi_{t,s}/O_{24}$ of a given genus and intersecting the isotropy subgroups a given number of times, we can compute the genus of the lifted surface using the Riemann-Hurwitz formula.  For instance, consider a two-sphere $\Sigma$ in $\R^3/O_{24}$ intersecting each of the three singular rays with isotropy $\mathbb{Z}_2$,  $\mathbb{Z}_3$, and  $\mathbb{Z}_4$ precisely once.  Then $\pi:\pi^{-1}(\Sigma)\rightarrow \Sigma$ is a branched covering map and one has the following expression for the Euler characteristic of the lifted surface:
\begin{equation}\label{sum}
\chi(\pi^{-1}(\Sigma))=24\chi(\Sigma)-\sum_{\{\pi^{-1}(x):x\in\Sigma\cap\mathcal{S}\}} (|G_x|-1).
\end{equation}

Since $\Sigma$ intersects each singular arc once, there are twelve points in $\pi^{-1}(\Sigma)$ with isotropy $\mathbb{Z}_2$, six with isotropy $\mathbb{Z}_4$ and eight with isotropy $\mathbb{Z}_3$.  Thus we obtain that the term in the summation in \eqref{sum} in the case at hand is $-12*1-6*3-8*2$, so that $\chi(\pi^{-1}(\Sigma))=2$ and $\pi^{-1}(\Sigma)$ is a sphere.  Similarly one can show that a two-sphere in $\R^3/O_{24}$  intersecting any one of the three singular arcs twice and disjoint from the other two lifts to a union of two-spheres. 

Let us now classify the possible compressions of $\Phi_{t,s}/O_{24}$ into $B_1$. Note that $\Phi_{t,s}/O_{24}=\partial B_1=\partial B_2$ intersects the singular set $\mathcal{S}$ in four points: $a$ and $a'$ of type $\mathbb{Z}_2$ and $b$ and $b'$ of type $\mathbb{Z}_3$.  Moreoever $\mathcal{S}\cap B_1$ consists of two unknotted arcs, $A$ joining $a$ to $a'$ and $B$ joining $b$ to $b'$.  A compression can occur along either $A$, $B$ or be completely disjoint from the singular set $A\cup B$.  If the compression occurs along $A$, then afterwards one is left with two spheres $S_1$ and $S_2$ in $B_1$, intersecting in total the singular arc $A$ four times, and intersecting the singular arc $B$ two times.  Since by topological considerations each such sphere must intersect the arcs $A$ and $B$ an even number of times, it follows that $S_1$ intersects $A$ twice and $B$ twice, while $S_2$ only intersects $A$ twice and is disjoint from $B$.  This case corresponds in the lifted picture to the pinching off of a sphere from the genus $5$ surface.  The sphere cannot contribute to the min-max limit (since otherwise the min-max limit would be a $\mathbb{S}^2_*$ with some multiplicity, which is ruled out since $\omega_1<\omega_2<2\omega_1$).  Analogously, a compression along the arc $B$ pinches off spheres in the lifted picture for the same reason.  Finally, one can have a compression that is disjoint from $A$ and $B$.  Since the parity of number of intersection points with $A$ and $B$ must be even, it follows in this case that after compression one has i) two spheres $S_1$, $S_2$, where $S_1$ intersects $A$ twice and is disjoint from $B$ and vice versa for $S_2$ or else ii) $S_1$ intersects $A$ and $B$ twice and $S_2$ is disjiont from $A$ and $B$.  In the lifted picture i) corresponds to the min-max sequence degenerating to a union of two-spheres, which is ruled out as above.  Situation ii) corresponds in the lifted picture to the pinching off of spheres from the genus $5$ surface, which cannot contribute as before.

Let us finally classify compressions into $B_2$.  One can think of the singular set inside $B_2$ as consisting of two points $\{0\}$ and $\{\infty\}$ along with several arcs. There is an arc $C$ with $\mathbb{Z}_4$ isotropy contained in $B_2$ joining $\{0\}$ to $\{\infty\}$.  There are arcs $D$ and $D'$ with $\mathbb{Z}_3$ isotropy emanating from $\{0\}$ and $\{\infty\}$ respectively, both connecting to $\partial B_2$.   Finally there are arcs $E$ and $E'$ with isotropy $\mathbb{Z}_2$ joining $\{0\}$ to $\partial B_2$ and $\{\infty\}$ to $\partial B_2$ respectively.   As before, compressing along $E$, $E'$, $D$ or $D'$ merely splits off a sphere in the lifted picture.  Compressing along $C$ however results in two parallel two spheres, which would give rise to $\mathbb{S}^2_*$ with multiplicity $2$ or $1$. This is ruled out again since $\omega_1<\omega_2<2\omega_1$.

There is one further compression that can occur into $B_2$ which is due to the non-compactness of the ambient space $\R^3$.  Instead of compressing into $C$ to result in two spheres, one can perform the compression in a degenerate way so that the ``outer" sphere enclosing $\{\infty\}$ collapses to $\{\infty\}$ in the process.  This gives rise in the lifted picture to possibility iii), i.e. namely, that the outer sphere pushes out to infinity and the catenoidal necks extend to form ends.  By \cite{B}, this cannot occur.

Note that while we have controlled the genus of the min-max limit, tentacles of the min-max sequence can always press out into containing $\{\infty\}$ which result in adding more and more ends.  We do not have a way to rule this out.

\begin{remark}
For the other four self-shrinkers, the same analysis applies.  Projected to the quotient, the sweepout surfaces consist of two concentric spheres joined by a neck along one of the singular arcs, so that the resulting connected two-sphere intersects each of the two other singular arcs twice.  For instance, the self-shrinker of genus $7$ with symmetry group $O_{24}$ results from adding the neck along the ray of $\mathbb{Z}_3$ isotropy.
\end{remark}

It remains finally to consider the case $\omega_2=\omega_1$.  In this case however Lusternick-Schnirelman theory applies:

\begin{lemma}\label{ls}
If $\omega_2=\omega_1$ then $\R^3$ contains infinitely many minimal $O_{24}$-equivariant genus zero or genus five surfaces of area $\omega_1$.
\end{lemma}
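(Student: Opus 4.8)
The plan is to run a Lusternik--Schnirelmann argument, adapted to the equivariant Gaussian setting, exploiting the M\"obius band structure of the parameter domain recorded in Remark \ref{special}. I argue by contradiction: suppose $\R^3$ contains only finitely many $O_{24}$-equivariant self-shrinkers of genus $0$ or $5$ with Gaussian area $\omega_1$; list them as $\Gamma_1,\dots,\Gamma_N$ (by Brendle \cite{B} the only genus $0$ one is $\mathbb{S}^2_*$). The first step is to observe, exactly as in the analysis of degenerations (i)--(iii) above -- using Lemma \ref{catenoid}, Brendle's classification, the compression analysis in $\R^3/O_{24}$, and the fact that every self-shrinker has Gaussian area at least $1$ (so that multiplicity $\ge 2$ is impossible, since $\omega_1=F(\mathbb{S}^2_*)<2$) -- that every min-max sequence for $\Pi_\Psi$ subconverges as varifolds to a \emph{single}, multiplicity-one, embedded self-shrinker of genus $0$ or $5$ and Gaussian area $\omega_1$; that is, to one of the $\Gamma_j$.

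Next I would invoke the interpolation/pull-tight machinery behind Theorem \ref{equivariant}, carried out $O_{24}$-equivariantly and fixing the boundary slices that define $\Pi_\Psi$, to produce a minimizing sequence $\{\Phi^i_{t,s}\}\in\Pi_\Psi$ for $\omega_2=\omega_1$ such that for all large $i$ every slice with $F(\Phi^i_{t,s})>\omega_1-\epsilon_i$ lies within a fixed varifold-distance $\delta$ of $\{\Gamma_1,\dots,\Gamma_N\}$, where $\epsilon_i\to0$ and $\delta$ is small enough that the $\delta$-neighbourhoods $U_1,\dots,U_N$ are pairwise disjoint. Setting $W_j^i:=(\Phi^i)^{-1}(U_j)$, these are pairwise disjoint open subsets of the parameter square. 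Now comes the topological core: by Remark \ref{special} every \emph{cycle} $\gamma$ is a non-trivial one-parameter sweepout of $\R^3$, so $\sup_\gamma F(\Phi^i)\ge\omega_1$, hence $\gamma$ meets $\bigcup_j W_j^i$. Sweeping the cycles through a continuous one-parameter family $\{\gamma_u\}$ that fills out the M\"obius band (e.g.\ the lines $t=\tau$, $s=u+(1-2u)\tau$ together with suitable curved cycles), and using that the $W_j^i$ are disjoint open sets, the index $j=j(u)$ hit by $\gamma_u$ is locally constant, so a single surface -- say $\Gamma_1$ -- is hit by a family of cycles still carrying the nontrivial class of the M\"obius band. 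Letting $i\to\infty$ and $\delta\to0$ and combining with a local deformation near the unstable surface $\Gamma_1$ (which strictly lowers Gaussian area off a descending set of dimension equal to the Morse index of $\Gamma_1$), one finds that $\Gamma_1$ alone cannot ``carry'' this two-parameter worth of concentration -- unless $\Gamma_1$ fails to be isolated among minimal surfaces of area $\omega_1$. This contradicts the finiteness hypothesis, and the lemma follows.

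The step I expect to be the main obstacle is the middle one: turning the equality $\omega_2=\omega_1$ into the statement that the almost-maximal slices must concentrate on the critical level along a set of parameters that is topologically too large to be absorbed by finitely many isolated minimal surfaces. This requires the full deformation (``pull-tight'') apparatus of \cite{KZ}, \cite{K2}, upgraded so that all homotopies are $O_{24}$-equivariant and respect the boundary conditions defining $\Pi_\Psi$, together with the clean topological lemma -- the heart of Lusternik--Schnirelmann -- that a subset of the M\"obius band meeting every cycle is homologically nontrivial (equivalently, a cup-length statement in $H^{*}(M,\partial M;\mathbb{Z}/2)$). By contrast, the genus and multiplicity bookkeeping in the first step and the explicit families of cycles in the last step are routine given the tools already assembled, and essentially the same Lusternik--Schnirelmann scheme was carried out in \cite{KMN}.
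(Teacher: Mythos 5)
Your proposal shares the overall skeleton of the paper's argument: contradiction, a pull-tight/deformation step, and a Lusternik--Schnirelmann duality on the M\"obius band of Remark~\ref{special}. But the final step is where the proposals diverge, and yours has a real gap.

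In the paper, the LS dichotomy is used as follows. Let $\Omega_i\subset I^2$ be the set of parameters whose slice is $\mathbf{F}$-close to the (finite, by hypothesis) set $\mathcal S$ of equivariant stationary shrinkers of mass $\le\omega_2$. Either (a) $\Omega_i$ contains a cycle, or (b) its complement contains a path $\lambda_i$ from $(0,0)$ to $(1,1)$. In case (b) the induced \emph{one-parameter} family $\Phi^i_{\lambda_i(\tau)}$ is a competitor for $\omega_1$ with $\sup F\le\omega_1+\epsilon_i$ and with every slice a fixed $\mathbf F$-distance $\delta$ from any potential min-max limit; the \emph{one-parameter} pull-tight of Pitts/Colding--De~Lellis then forces $\omega_1$ strictly down, a contradiction. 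Hence (a) holds, and the resulting cycle $\gamma_i$ yields a sweepout of $\R^3$ by surfaces all within $\delta$ of finitely many fixed surfaces. The contradiction is then completed by Proposition~3.6 of \cite{MN2}: a family staying in a small $\mathbf F$-neighborhood of finitely many surfaces cannot be a sweepout of $\R^3$. That proposition is the crucial final ingredient, and it is purely a topological ``non-sweepout'' criterion, not a Morse-index statement.

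Your last step instead asserts that a single $\Gamma_1$ ``cannot carry'' the two-parameter concentration because of a local deformation ``off a descending set of dimension equal to the Morse index of $\Gamma_1$.'' This is not justified and, as stated, is unlikely to close the argument: the genus-five shrinker is expected to have equivariant index at least $2$, exactly enough to absorb the two-parameter class, so the Morse-index count by itself does not produce a contradiction. The paper never needs to know the index of the limit; it replaces your Morse-theoretic heuristic with the cited non-sweepout criterion. You also identify the two-parameter equivariant pull-tight as the main obstacle, but the paper avoids it: pull-tight is only applied to a one-parameter subfamily (the path $\lambda_i$), where the machinery is standard. Finally, your preliminary step ruling out multiplicity $\ge 2$ via ``every shrinker has $F\ge1$ and $\omega_1=F(\mathbb S^2_*)<2$'' is unnecessary here and in any case superseded by Lemma~\ref{catenoid}, which is what the paper actually uses.
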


Thus if $\omega_2=\omega_1$, since genus $0$ shrinkers are classified by Brendle \cite{B}, it follows that either $\R^3$ contains infinitely many minimal genus $5$ surfaces or else $\omega_2>\omega_1$ and we are done.   

\begin{remark}
Note that from Corollary 1.2 in \cite{BW} that the only self-shrinker of area $\omega_2=\omega_1$ is $\mathbb{S}^2_*$.  Thus in the equality case, it cannot happen that there are infinitely many genus $5$ shrinkers.  We do not need to use this fact however.
\end{remark}

It remains to prove Lemma \ref{ls}:
\\
\\ \noindent
\emph{Proof of Lemma \ref{ls}:}
Suppose toward a contradiction that $\R^3$ admitted only finitely many minimal genus zero equivariant surfaces.  Denote by $\mathcal{S}$ the set of $O_{24}$-equivariant stationary varifolds whose support is a smooth embedded genus $0$ or $5$ shrinker and whose mass is at most $\omega_2$.  For $\delta>0$, denote by $T_\delta(\mathcal{S})$ the set of 2-varifolds in $\R^3$ that are within $\delta$ of $\mathcal{S}$ in the $\mathbf{F}$-metric (defined in Section 5.1 \cite{KZ}).   By assumption we have a sequence of sweepouts  $\{\Phi^i_{t,s}\}_{(t,s)\in I^2}$ satisfying
\begin{equation}\label{min}
\sup_{(t,s)\in I^2}|\Phi^i_{t,s}|\leq\omega_2+\epsilon_i,
\end{equation}
for $\epsilon_i\rightarrow 0$ as $i\rightarrow\infty$.  

Fix $\delta>0$.  For each $i$, consider the sets in $I^2$ whose corresponding surface is close to a smooth minimal genus $0$ surface:
\begin{equation}
\Omega_i=\{(s,t)\in I^2\; : \mathbf{F}(\Phi^i_{t,s},\mathcal{S})\leq\delta\}
\end{equation}
Note that for each $i$, for some $\eta_i>0$, the sets $[0,1]\times[0,\eta_i]\subset I^2$ and  $[0,1]\times[1-\eta_i,1]\subset I^2$ are both disjoint from $\Omega_i$.

We claim that for $i$ large enough, $\Omega_i$ contains a cycle (in the sense of Remark \ref{special}) $\gamma_i(\tau)\subset I^2$.  If not, then we can find a sequence of closed curves $\lambda_i(\tau)$ in $I^2$ in $I^2\setminus\Omega_i$ starting at $(0,0)$ and ending at $(1,1)$.  The family $\tau\rightarrow \Phi^i_{\lambda_i(\tau)}$ gives a sweepout of $\R^3$.  Since $\omega_1=\omega_2$, we obtain from \eqref{min} that 
\begin{equation}\label{secondmin}
\sup_{\tau\in [0,1]}|\Phi^i_{\lambda_i(\tau)}|\leq\omega_1+\epsilon_i,
\end{equation}

Since the sweepouts $\{\Phi^i_{\lambda_i(\tau)}\}_{\tau=0}^1$ satisfy \eqref{secondmin} they are a minimizing sequence for the one-parameter homotopy class $\Pi_{\Phi}$ that produced $\Sigma_1=\mathbb{S}^2_*$.  Since by definition each surface in $\{\Phi^i_{\lambda_i(\tau)}\}_{\tau=0}^1$ is a definite distance $\delta$ away from the set of smooth minimal surfaces obtainable as limits for almost minimizing sequences, it follows that no min-max sequence obtained from $\{\Phi^i_{\lambda_i(\tau)}\}_{\tau=0}^1$ can be almost minimizing in annuli.  Thus the combinatorial ``pull-tight" argument of Pitts \cite{P} (as formulated in Proposition 2 in \cite{CD}) gives rise to a new minimizing sequence $\{\tilde{\Phi}^i_{\lambda_i(\tau)}\}_{\tau=0}^1$ with all areas strictly below $\omega_1$.  This contradicts the definition of $\omega_1$.   Thus the claim is established, i.e., $\Omega_i\subset I^2$ contains a non-trivial cycle for $i$ large.  

Unraveling this, for each $\delta>0$ we obtain for large $i$ a curve $\gamma_i(\tau)$ in $I^2$ so that the corresponding sweepout $\Phi^i_{\gamma_i(\tau)}$ of $\R^3$  consists of surfaces all within $\delta$ of $\mathcal{S}$ (in the $\mathbf{F}$-metric).   But by Proposition 3.6 in \cite{MN2}, if $\delta$ is chosen sufficiently small, $\Phi^i_{\gamma_i(\tau)}$ is {\it not} a sweepout of $\R^3$ (as it consists of surfaces in a small neighborhood of finitely many surfaces). Thus we have contradicted the assumption that there are only finitely many minimal genus $0$ or $5$ $O_{24}$-equivariant self-shrinkers in $\R^3$ of area $\omega_1$.\qed

\subsection{Further discussion}
Chopp \cite{C} and Ilmanen \cite{I} also showed numerically that one could produce self-shrinkers resembling a doubled plane, where the necks are placed symmetrically around a circle.   One can try the same procedure that we use in this paper.  Namely, first consider one parameter sweepouts by parallel planes which have the appropriate symmetry.  The min-max surface realizing the width for this family is the plane in the appropriate direction.  Then one can take connect sum of parallel affine planes to produce a two parameter family whose width must be strictly less than two by the Catenoid Estimate.  The difficulty is that the min-max limit for this two parameter family could degenerate equivariantly not only to a plane with multiplicity $1$ (where Lusternick-Schnirelman theory applies) but also to a sphere or cylinder.  The cylinder has equivariant index one and no nullity, and since one expects the min-max limit to have index two, one can hope to rule this out.  The sphere however has equivariant index two, and it is not clear to me how to rule it out.

\end{document}